\definecolor{darkgreen}{rgb}{0,0.5,0}
\definecolor{darkred}{rgb}{0.8,0,0}
\definecolor{darkblue}{rgb}{0,0,0.7}
\newcommand{\lam}{z}
\newtheorem{theorem}{Theorem}[section]
\newtheorem*{theorem*}{Theorem}
\newtheorem{lemma}[theorem]{Lemma}
\newtheorem{corollary}[theorem]{Corollary}
\newtheorem{question}[theorem]{Question}
\theoremstyle{remark}
\newtheorem{remark}[theorem]{Remark}
\newtheorem{example}[theorem]{Example}
\title{Crouzeix's Conjecture and Related Problems}
\author[1]{Kelly Bickel}
\author[2]{Pamela Gorkin
\thanks{Since August 2018, Pamela Gorkin has been serving as a Program Director in the Division of Mathematical Sciences
at the National Science Foundation (NSF), USA, and as a component of this position, she
received support from NSF for research, which included work on this paper. Any opinions,
findings, and conclusions or recommendations expressed in this material are those of the
authors and do not necessarily reflect the views of the National Science Foundation.}}
\author[3]{Anne Greenbaum}
\author[4]{Thomas Ransford
\thanks{This research is supported by grants from NSERC and the Canada Research Chairs program.}}
\author[5]{Felix Schwenninger}
\author[6]{Elias Wegert}
\affil[1]{Department of Mathematics, Bucknell University, Lewisburg, PA 17837 USA}
\affil[2]{Department of Mathematics, Bucknell University, Lewisburg, PA 17837 USA}
\affil[3]{Department of Applied Mathematics, University of Washington}
\affil[4]{D\'epartement de math\'ematiques et de statistique, Universit\'e Laval, Qu\'ebec (QC) G1V 0A6, Canada}
\affil[5]{Department of Mathematics, Universit\"at Hamburg, Germany and Department of Applied Mathematics, University of Twente, The Netherlands}
\affil[6]{Department of Mathematics and Computer Science,
Technische Universit\"at Bergakademie Freiberg}
\begin{document}

\maketitle

\centerline{In Memory of Stephan Ruscheweyh}
\begin{abstract}
In this paper, we establish several results related to Crouzeix’s conjecture.
We show that the conjecture holds for contractions with eigenvalues that are sufficiently well-separated. This separation is measured by the so-called separation constant, which is defined in terms of the pseudohyperbolic metric.
Moreover, we study general properties of related extremal functions and associated vectors.
Throughout, compressions of the shift serve as illustrating examples which also allow for refined results.
\end{abstract}

\bigskip
\section{Introduction}

\subsection{Motivation}
One of the most important results in operator theory is due to John von Neumann
and states that for a fixed contraction $T$ on a Hilbert space, the operator
norm satisfies
\[
\|p(T)\| \le \sup_{z \in \mathbb{D}} |p(z)| \quad \text{ for all } p \in
\mathbb{C}[z],
\]
where
$\mathbb{D}$ is the complex unit disk and
$\mathbb{C}[z]$ denotes the space of all one-variable polynomials with
complex coefficients.

Variations of von Neumann's inequality can be extremely useful and thus are
frequently the object of study. Matsaev's conjecture
(see \cite{N74}), for example, asserts that for every
contraction $T$ on $L^p(\Omega)$ (where $\Omega$ is a
measure space and $1 \le p \le \infty$) and $U: \ell^p \to \ell^p$ the
unilateral shift operator defined by $U(a_0, a_1, \ldots) = (0, a_0, a_1,
\ldots)$, the following inequality holds:
\[
\|p(T)\|_{L^p \to L^p} \le \|p(U)\|_{\ell^p \to \ell^p} \quad
\text{ for all } p \in \mathbb{C}[z].
\]
For $p = 1$ and $p = \infty$, it is not difficult to see that this is
true, and for $p = 2$ it is equivalent to
von Neumann's inequality. However, Drury \cite{D} showed that
Matsaev's conjecture fails
for $p=4$.

Von Neumann's inequality can be reformulated for a general bounded operator $T$ as
\[ \|p(T)\| \leq \sup_{|z|\leq \|T\|}|p(z)|\quad \text{ for all }p\in \mathbb{C}[z].\]
One may ask whether the supremum can instead be taken over subsets of $\{z\in\mathbb{C}: |z|\leq\|T\|\}$, such as the spectrum $\sigma(T)$, by possibly allowing for an absolute multiplicative constant $C$ in the inequality.  By Crouzeix's theorem \cite{Cr07}, we may choose the subset to be the numerical range $W(T)$ of $T$, but it is still an open question as to what the best multiplicative constant $C$ is. This problem is known as Crouzeix's conjecture. 
To state it precisely, let 
$A$ be an $n \times n$ matrix with complex entries. Let $W(A)$  denote its \emph{numerical range}
and $w(A)$ its numerical radius,
\[
W(A) = \left\{\langle Ax, x\rangle \in\mathbb{C}: \|x\| = 1 \right\},
\qquad
w(A) = \max \left\{|z|: z \in W(A) \right\},
\]
where $\langle\cdot,\cdot\rangle$ refers to the Euclidean inner product and $\|\cdot\|$ to its induced norm.
Then $W(A)$ always contains the \emph{spectrum} of $A$, denoted by $\sigma(A)$.
However, $W(A)$ often encodes significantly more information about $A$.
For example, $A$ is Hermitian if and only $W(A) \subseteq \mathbb{R}$. Further,
if $A$ is normal, then $W(A)$ is the convex hull of the eigenvalues of $A$.
Thus, it seems plausible that the value of $p$ on $W(A)$
could be used to control $\|p(A)\|$.\footnote{From now on $\| \cdot \|$
will denote the 2-norm for vectors and the corresponding operator norm for
matrices: $\| A \| = \sup_{\| x \| = 1} \| Ax \|$, which is also the largest
{\em singular value} of $A$.}
Indeed, in \cite{Cr04, Cr07}, Crouzeix showed that
\begin{equation}\label{eqn:cc}
\|p(A)\| \le C \sup_{z \in W(A)} |p(z)| \quad \text{ for all } p \in \mathbb{C}[z],
\end{equation}
with $C = 11.08$ and he \emph{conjectured} that the \emph{best constant} in
\eqref{eqn:cc} is $C=2$.
Recently, Crouzeix and Palencia~\cite{CP17} proved that
\eqref{eqn:cc} holds with $C=1+\sqrt{2}$, which is the best general constant
known so far.
Since every \emph{normal matrix} is unitarily equivalent to a diagonal matrix,
it is clear that
\eqref{eqn:cc} holds with $C=1$ in this case  
and the supremum can be taken over the eigenvalues only. 
It should be noted that if Crouzeix's conjecture holds for matrices,
then it automatically holds for bounded operators on every Hilbert space {\cite{Cr07}}.
Also, if it holds for all polynomials, then it holds for all
functions analytic in the interior of $W(A)$ and continuous on the boundary,
since such functions can be arbitrarily well approximated by polynomials
\cite{Mergelyan1,Mergelyan2}.

Crouzeix's conjecture has inspired a great deal of mathematics and there are now
several classes of matrices for which Crouzeix's conjecture has been proved
(see, for example, \cite{BCD,  CGL18, Ch13,  CG, Cr04,DD99, GC}). In
particular, the conjecture is true for $2 \times 2$ matrices as well as
matrices of the form $aI + DP$ or $a I + PD$ where $a$ is a complex number, $D$
is a diagonal matrix, and $P$ is a permutation matrix (see
\cite{Ch13}, \cite{Cr04}, and \cite{GKL}), $3 \times 3$
tridiagonal Toeplitz matrices and matrices in this class with some diagonal
entries taken equal to zero, \cite{GKL}.

It is easy to show that the conjecture holds for Jordan blocks with zeros along
the diagonal; it was later shown in \cite{CG} that the conjecture also holds
for perturbed Jordan blocks:
\[
J_\nu = \begin{pmatrix}
\lambda & 1& 0 & \cdots & 0\\
0 & \lambda & 1 & \cdots & 0\\
\vdots & \vdots & \ddots & \ddots  & \vdots\\
0 & 0 & \cdots & \lambda  & 1  \\
\nu & 0 & \cdots & 0 &\lambda\\
\end{pmatrix}.
\]
These two classes of matrices (Jordan blocks and perturbed Jordan blocks when
$\lambda = 0$) are special cases of operators known as \emph{compressions of
the shift operator}. To define a general compressed shift, let $H^2$ denote the
usual Hardy space
of holomorphic functions on
$\mathbb{D}$ and $S$ the shift operator defined
on $H^2$
by $(Sf)(z) = zf(z)$. Beurling showed that the (closed) nontrivial invariant
subspaces for $S$ are of the form $\Theta H^2$, where $\Theta$ is an inner
function. Therefore, the invariant subspaces of the adjoint, $S^{*}$, are of
the form $K_\Theta:=H^2 \ominus \Theta H^2$, where $\ominus$ denotes the
orthogonal complement. The associated compressed shift operator $S_\Theta:
K_\Theta \to K_\Theta$ is defined by $S_\Theta(f) = P_\Theta S|_{K_\Theta}.$ If
$\Theta= B$ is a finite Blaschke product with $\deg B =n$, that is, if
\[
B(z):= c \prod_{j = 1}^n \frac{z - a_j}{1 - \overline{a_j} z},
\quad a_1, \dots, a_n \in \mathbb{D},
\quad |c| = 1,
\]
then $K_{\Theta}$ is finite-dimensional and $S_{\Theta}$ can be represented as
an $n\times n$ matrix.  Because much is known about the numerical ranges of
these $S_{\Theta}$ (\cite{DGSV, GW, GW1, M}), it is natural to consider
Crouzeix's conjecture for them. Moreover, Sz.-Nagy and Foias \cite{SFBK10}
showed that every completely non-unitary contraction of class $C_{0}$ with defect $1$
is
unitarily equivalent to some $S_{\Theta}$. Thus, establishing Crouzeix's
conjecture for such $S_{\Theta}$ would imply it for a large collection of
matrices at once.

In their paper \cite{CP17},
Crouzeix and Palencia used clever complex analysis techniques to study
\eqref{eqn:cc}. Specifically, for $\Omega$ an open, convex
set with smooth boundary containing $W(A)$, they showed that for all $f$
holomorphic on  $\Omega$ and continuous up to the boundary,
\[
\| f(A) \| \le (1+\sqrt{2})\sup_{z \in \Omega} |f(z)|,
\]
which implies \eqref{eqn:cc} with $C = 1+\sqrt{2}$.
An $f$ maximizing $\|f(A)\|$ among all holomorphic functions
 $f$ on $\Omega$ with $\sup_{z\in\Omega}|f(z)|\le1$ is called
\textit{extremal} for the pair $(A,\Omega)$.
 By a normal-families argument, such an extremal $f$ always exists.
Recall that for a complex function $h$ defined and continuous on the boundary
of a set $\Omega$, the Cauchy transform of $h$ on $\Omega$ is defined by
\[
K(h)(z) = \frac{1}{2\pi i} \int_{\partial \Omega} \frac{h(\zeta)}{\zeta - z} \ d \zeta, \quad
z \in \Omega.
\]
A key part of the Crouzeix-Palencia proof showed that if $f \in \mathcal{A}(\Omega) := H^\infty(\Omega) \cap C(\overline{\Omega})$
and
$g= K(\overline{f})$, then
\[
\| f(A) + g(A)^{*} \| \le 2 \sup_{z\in \Omega} |f(z)|,
\]
where
the asterisk denotes the adjoint (or conjugate transpose)
of an operator.
It follows that if it were true that for extremal $f$, we have
\begin{equation}
\| f(A) \| \le \| f(A) + g(A)^{*} \|,
\label{strongerconj}
\end{equation}
then Crouzeix's conjecture would follow for $A$.
This motivates our study of such extremal $f$ below. See also \cite{RS18}
for an analysis of the Crouzeix-Palencia proof.

\subsection{Main Results}
In this paper, we provide a survey of our recent investigations related to
Crouzeix's conjecture; in particular, we derive specific bounds for $\|f(A)\|$,
where $f$ is chosen in an appropriate algebra, as well as properties of related
extremal functions and associated vectors. Throughout, we will use compressions
of the shift to both motivate and illustrate our results. While these
investigations have yielded a number of results, many questions remain open.
Below and throughout this paper, we will highlight these open questions and
invite any interested parties to take up their study.

Recall that if $A$ has distinct eigenvalues, then $A$ factors as $X \Lambda
X^{-1}$ for some diagonal $\Lambda$ and $f(A) = X f(\Lambda) X^{-1}$. In
Section \ref{sec:CC}, we use this formula paired with classical results about
function theory on $\mathbb{D}$ to study $\| f(A) \|$. First, in
Subsection~\ref{sec:cc1}, we let $A$ be a contraction with eigenvalues
$\lambda_1, \ldots, \lambda_n \in \mathbb{D}$ that are pseudohyperbolically
well separated (see \eqref{eqn:pseudo} below) and let $\delta$ denote a
constant depending on the separation of the eigenvalues given in
\eqref{eqn:delta1}. Then in
Theorem~\ref{crouzeix}, we combine results from interpolation theory with von
Neumann's inequality to deduce the existence of a constant $M(\delta)$ such
that
\begin{equation} \label{eqn:delta}
\|p(A)\| \le M(\delta) \max_{z \in \sigma(A)}|p(z)| \quad
\text{ for all } p \in \mathbb{C}[z],
\end{equation}
where $M(\delta) \to 1$ as $\delta \to 1$. For $\delta$ sufficiently close
to $1$, this implies that the matrix $A$ is \emph{near normal} in the sense
that it has a well-conditioned matrix of eigenvectors;
that is, 
\[\kappa (X) := \| X \| \cdot \| X^{-1} \|\]
is of moderate size.  We
thus have a criterion for near normality in terms of the eigenvalues and the
largest singular value (i.e., the operator norm) of the matrix.  Clearly, if
$\kappa (X) \leq 2$, then Crouzeix's conjecture holds for $A$ since
 \begin{equation}\label{eqn:kappa}
\| f(A) \| = \| X f( \Lambda ) X^{-1} \| \leq \kappa (X) \max_{z \in \sigma
(A)} | f(z) | \leq \kappa (X) \max_{z \in W(A)} | f(z) |.
\end{equation}
For a matrix $A$ with distinct eigenvalues, one can similarly define the minimal condition number of an eigenvector matrix of $A$ by
\begin{equation} \label{eqn:k}
\eta(A):=\inf\{\|X\| \|X^{-1}\|: A = X \Lambda X^{-1}\}.
\end{equation}
and as in \eqref{eqn:kappa}, if $\eta(A) \le 2$, then
Crouzeix's conjecture holds for $A$. In Subsection~\ref{sec:cc2}, 
we study this setup for
matrices $M_\Theta$ that are representations of the
compression of the shift $S_{\Theta}$, associated with a finite Blaschke product
$\Theta$ with distinct zeros in $\mathbb{D}$.
First, in Theorem~\ref{thm:eigenvectors}, we provide tractable formulas for specific
$X$ and $X^{-1}$. Then, in Theorem~\ref{thm:condition}, we use these formulas
to obtain a bound on $\eta (M_{\Theta})$ in terms of the separation of the
zeros of $\Theta$. Because compressions of the shift are quite important, we
pose the following question:

\begin{question} \label{q:condition}
What is the minimal condition number of an eigenvector matrix
$X$ of $M_{\Theta}$ for a general finite Blaschke
product $\Theta$ with distinct zeros?
\end{question}

In Section \ref{sec:extremal}, we turn to extremal functions and vectors. Let
$\Omega$ be a bounded simply connected domain with smooth boundary containing
the spectrum, $\sigma(A)$, of $A$. We are interested in studying
$\sup_f\|f(A)\|$, where the supremum is taken over all $f\in H_1^\infty(\Omega)$,
the closed
unit
ball of bounded holomorphic functions on $\Omega.$ As before, an $f$ for which
the supremum is attained is called \emph{extremal} for the pair $(A,\Omega)$
and any non-zero vector $x$ where $\| f(A) x \| = \| f(A)\| \| x\|$
(i.e., any right singular vector of $f(A)$ associated with the
largest singular value) is called an associated \emph{extremal vector}.
Such vectors are also called \emph{maximal}, see for example \cite{Sar67}.
Crouzeix \cite[Theorem~2.1]{Cr04} showed that an extremal
function
for $(A,\Omega)$ is necessarily of the form
$B_A\circ\phi$, where $\phi$ is a bijective conformal map of $\Omega$ onto the
open unit disk $\mathbb{D}$, and $B_A$ is a Blaschke product of degree at most
$n-1$.

In Section \ref{sec:comp}, we consider compressions of the shift $S_{\Theta}$
for $\Theta$ a finite Blaschke product and $\Omega= \mathbb{D}$. Theorem
\ref{thm:norm}, which is proved in \cite{GR09}, characterizes the extremal
functions for $(S_{\Theta}, \mathbb{D})$; here, we provide a simple proof in
the case where $\Theta$ has distinct zeros. Meanwhile, Theorem \ref{thm:vector}
characterizes the associated extremal vectors. In Section \ref{sec:degree}, we
consider a general $n\times n$ matrix $A$ and set $\Omega = W(A)^\circ$,
assuming that $\sigma(A) \subseteq W(A)^\circ$. Characterizing the extremal
functions in this situation is significantly more complicated. Instead of
tackling that problem in its entirety, we investigate the possible degrees of
an extremal Blaschke product $B_A$. In Section~\ref{sec:degree},
we give an example of a matrix (defined in \eqref{E:Crabb}) for which the only
extremal functions have
(maximal) degree $n-1$,
and in Theorem~\ref{T:openset}
we show
that there is an open set of $n\times n$ matrices for which the extremal
Blaschke products have maximal degree. The following question is still open:

\begin{question} \label{q:degree} Given an $n \times n$ matrix $A$ with
$\sigma(A) \subseteq W(A)^\circ$ and setting $\Omega = W(A)^\circ$, what are the
degree(s) of the associated extremal Blaschke product(s) $B_A$?
\end{question}

In Section \ref{sec:EFV}, we return to a general $A$ and $\Omega$ and study
the associated extremal functions and vectors. It is already known that
extremal functions  enjoy the following orthogonality property, see
\cite[Theorem~5.1]{CGL18}: if $f$ is  extremal  for $(A,\Omega)$,
if $\|f(A)\|>1$,  and if $x$ is a unit vector on which $f(A)$ attains its
norm (i.e. an extremal unit vector), then  $\langle f(A)x,x\rangle=0$.
We generalize this result in Section~\ref{sec:orthog}. In particular, if $x$
is an extremal unit vector, Theorem~\ref{T:orthog} shows that if
$f = f_1 \cdot f_2$ is a factorization of $f$ with each
$f_j \in H^\infty_1(\Omega)$, then
\[
\|f(A)\|^2 \langle f_1(A)x, x\rangle = \langle f_1(A)x, f_2(A)^{*} f_2(A) x\rangle.
\]
This can be viewed as a sort of cancellation theorem, particularly in the case
when $\|f(A)\| = 1$. In Theorem \ref{T:worthog}, we prove a similar result for
functions extremal with respect to the numerical radius.

In Section~\ref{sec:rep}, we provide representation theorems for extremal
vectors.
For example, using an extremal function $f$ for $(A, \Omega)$, we obtain
Theorem~\ref{T:rep}, which shows that for each associated extremal unit vector $x$, there is a
unique Borel probability measure $\mu$ defined on $\partial \Omega$ such that
for all $h \in \mathcal{A}(\Omega)$, we have
\[
\langle h(A)x, x\rangle = \int_{\partial \Omega} h \, d\mu.
\]
A similar result holds for vectors that are extremal with respect to the
numerical radius. To demonstrate Theorems \ref{T:orthog} and \ref{T:rep},
we apply  them to the extremal functions and vectors for
$(S_{\Theta}, \mathbb{D})$, see Examples \ref{ex:comp1} and \ref{ex:comp2}.
Furthermore, the connections between Crouzeix's conjecture and the structure
of extremal functions and vectors mentioned earlier motivate the following
question:
\begin{question} \label{q:cc}
Can Theorems \ref{T:orthog} or \ref{T:rep} be used to characterize extremal
functions and/or vectors associated to $(A, W(A)^\circ)$?
\end{question}
More open questions related to these topics are delineated throughout the
rest of the paper.


\section{Crouzeix's Conjecture via Pointwise Bounds and Condition Numbers}
\label{sec:CC}

In this section, $A$ is a contraction with distinct eigenvalues in $\mathbb{D}$. Note that $A$ factors as $X \Lambda X^{-1}$ with $\Lambda$ diagonal and, for  any $f$ defined on the eigenvalues of $A$, we have $f(A) = X f(\Lambda) X^{-1}$. To measure how separated the eigenvalues are, we define the \emph{pseudohyperbolic distance} between $z$ and $w$ in $\mathbb{D}$ as
\begin{equation}\label{eqn:pseudo}
\rho(z, w) := \left|\frac{z-w}{1 - \overline{w}z}\right|.
\end{equation}
We use classical function theory to study $\| f(A) \|$  for certain classes of functions $f$. Recall that
$H^2(\Omega)$ is the usual Hardy space on a domain $\Omega$, and let the algebra consisting of bounded analytic functions on $\Omega$ be denoted by $H^\infty(\Omega)$, with closed unit ball $H^{\infty}_1(\Omega)$. When $\Omega = \mathbb{D}$, we often simply write $H^2$ and $H^\infty$.

\subsection{Bounds via Interpolation Theory} \label{sec:cc1}

For a finite or infinite sequence $S = (z_j)$ of points in $\mathbb{D}$, we let
\begin{equation} \label{eqn:delta1} \delta_S = \inf_j  \prod_{k \ne j} \rho(z_j, z_k),\end{equation} where $\delta_S$ is called the \emph{separation constant} corresponding to $S$. The following result due to J. P. Earl connects this separation constant to interpolation problems:

\begin{theorem}[\cite{E70}]\label{Earl2} Let $S := (z_j)$ be a sequence in $\mathbb{D}$ with separation constant $\delta_S>0$ and let $(w_j)$ be a bounded sequence of complex numbers.  Then there exists $F \in H^\infty$ solving the interpolation problem $F(z_j) = w_j$, for $j = 1, 2, \ldots$,  with $\|F\|_{H^\infty} \le M(\delta_S)\sup_j|w_j|$, where
\[M(\delta) = \left( 1/\delta + \sqrt{1/\delta^2-1}\right)^2.\] 
\end{theorem}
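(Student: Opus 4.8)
The plan is to follow Earl's explicit approach and construct an interpolant of the special form $F = \lambda B$, where $B$ is a Blaschke product and $\lambda \in \mathbb{C}$ is constant. Normalize so that $\sup_j |w_j| = 1$ and write $M = M(\delta_S)$. If we take $|\lambda| = M$, then any such $F$ automatically satisfies $\|F\|_{H^\infty} = |\lambda| = M$, because $|B| \le 1$ on $\mathbb{D}$; so the norm bound is built into the ansatz and the only remaining task is to arrange the interpolation conditions. Writing $B = \prod_k \epsilon_k b_{\zeta_k}$ with $b_\zeta(z) = (z-\zeta)/(1-\bar\zeta z)$ and $|\epsilon_k| = 1$, the conditions $F(z_j) = w_j$ become $B(z_j) = w_j/\lambda$, each of modulus $|w_j|/M \le 1/M < 1$; these are to be solved by placing the zeros $\zeta_j$ near the nodes $z_j$.

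I would set this up as a fixed-point problem for the vector of zeros. Letting $B_j = \prod_{k \ne j} \epsilon_k b_{\zeta_k}$ denote the product with the $j$-th factor deleted, the $j$-th condition reads $\epsilon_j b_{\zeta_j}(z_j) B_j(z_j) = w_j/\lambda$. Taking moduli forces
\[
\rho(\zeta_j, z_j) = \frac{|w_j|}{M \prod_{k \ne j} \rho(\zeta_k, z_j)},
\]
and the remaining phase freedom in $\zeta_j$ and $\epsilon_j$ lets us match the argument exactly. This identity defines a self-map on configurations $(\zeta_j)$, and a fixed point of it yields the desired $B$. I would first carry this out for a finite sequence of $N$ nodes, where the self-map acts continuously on the compact polydisk $\prod_j \{\zeta : \rho(\zeta, z_j) \le r\}$ for a suitable radius $r$; Brouwer's theorem then supplies a fixed point. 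A normal-families argument, letting $N \to \infty$, recovers the interpolant for the full sequence while preserving the bound $\|F\|_{H^\infty} \le M$.

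The crux, and the step I expect to be the main obstacle, is showing that the self-map sends the polydisk of radius $r$ into itself, since this both produces the fixed point and pins down the sharp constant. The key estimate is a perturbation bound for products of pseudohyperbolic factors: if $\rho(\zeta_k, z_k) \le r$ for all $k$, then the lower form of the pseudohyperbolic triangle inequality applied factorwise, combined with $\prod_{k \ne j} \rho(z_k, z_j) \ge \delta_S$, gives
\[
\prod_{k \ne j} \rho(\zeta_k, z_j) \ge \frac{\delta_S - r}{1 - r\delta_S}.
\]
Substituting this into the displayed identity, the self-mapping condition $\rho(\zeta_j', z_j) \le r$ is guaranteed once $\tfrac{1}{M}\cdot\tfrac{1 - r\delta_S}{\delta_S - r} \le r$, that is, once $M r^2 - \delta_S(M+1) r + 1 \le 0$ admits a solution $r \in (0,1)$. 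This quadratic in $r$ is solvable exactly when its discriminant is nonnegative, $\delta_S^2 (M+1)^2 \ge 4M$, i.e. $\delta_S(M+1) \ge 2\sqrt{M}$; the borderline equality $\delta_S(M+1) = 2\sqrt{M}$ is precisely $M = (1/\delta_S + \sqrt{1/\delta_S^2 - 1})^2 = M(\delta_S)$. Thus choosing $M = M(\delta_S)$ makes the construction self-consistent, and establishing the product-perturbation inequality together with the continuity needed for the fixed-point argument is where the real work lies.
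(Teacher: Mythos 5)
The paper offers no proof of this statement --- it is quoted directly from Earl's 1970 paper \cite{E70} --- so there is no internal argument to compare against; the right benchmark is Earl's original proof, and your proposal is essentially a faithful reconstruction of it. The overall scheme (interpolation by a constant multiple $\lambda B$ of a Blaschke product with zeros confined to pseudohyperbolic disks $\rho(\zeta_j,z_j)\le r$, the self-consistency quadratic $Mr^2-\delta_S(M+1)r+1\le 0$, the discriminant condition $\delta_S(M+1)\ge 2\sqrt{M}$ whose borderline case yields exactly $M(\delta_S)=\bigl(1/\delta_S+\sqrt{1/\delta_S^2-1}\bigr)^2$, Brouwer on finite sections, then normal families) is correct, and your algebra checks out: at equality the double root is $r=\delta_S(M+1)/(2M)=1/\sqrt{M}=\delta_S/\bigl(1+\sqrt{1-\delta_S^2}\bigr)<\delta_S<1$. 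You should record the inequality $r<\delta_S$ explicitly, since it is what keeps the perturbed factors positive; note also that each pairwise distance satisfies $\rho(z_j,z_k)\ge\delta_S$ because the remaining factors of the defining product are at most $1$.

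Two details in your sketch need to be supplied. First, your key estimate does not follow from the factorwise triangle inequality ``combined with'' $\prod_{k\ne j}\rho(z_k,z_j)\ge\delta_S$ alone: writing $\phi_r(t):=(t-r)/(1-rt)$, the factorwise bound gives $\prod_{k\ne j}\rho(\zeta_k,z_j)\ge\prod_{k\ne j}\phi_r\bigl(\rho(z_k,z_j)\bigr)$, and you then need the separate elementary lemma $\phi_r(a)\,\phi_r(b)\ge\phi_r(ab)$ for $r\le a,b<1$ --- true, since after cross-multiplying, the difference of the two sides equals $r(1+r)(1-a)(1-b)(1-ab)\ge 0$ --- which by induction gives $\prod_{k\ne j}\phi_r(\rho(z_k,z_j))\ge\phi_r\bigl(\prod_{k\ne j}\rho(z_k,z_j)\bigr)\ge\phi_r(\delta_S)$. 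This superadditivity-under-products lemma is precisely the heart of Earl's argument, so you were right to flag this step as the crux, but it should be stated and proved rather than folded into the triangle inequality. Second, your fixed-point setup needs a small repair: the unimodular parameters $\epsilon_j$ live on a torus, which is not convex, so Brouwer does not apply as written. The fix is to drop the $\epsilon_j$ entirely and let the angular position of $\zeta_j$ carry the phase: pseudohyperbolic disks are Euclidean disks, so $\prod_j\{\zeta:\rho(\zeta,z_j)\le r\}$ is compact and convex, and one defines the updated zero $\zeta_j'$ as the unique solution of $b_{\zeta_j'}(z_j)=(w_j/\lambda)/B_j(z_j)$ (a nondegenerate real-linear system in $\zeta_j',\overline{\zeta_j'}$), which lies in the disk by your estimate and depends continuously on the old configuration because $|B_j(z_j)|\ge\phi_r(\delta_S)>0$. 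With these two points made precise, and noting that passing to finite subsequences only increases the separation constant, your outline completes to a correct proof of the theorem with the sharp constant.
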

Theorem \ref{Earl2} paired with von Neumann's inequality can be used to provide a bound on $\| f(A)\|$. In what follows, for an $n\times n$ matrix $A$ with distinct eigenvalues in $\mathbb{D}$, we define $\delta_{A}:=\delta_{\sigma(A)}$.

\begin{theorem}\label{crouzeix} Let $A$ be an $n \times n$ matrix with $\|A\| \le 1$ with distinct eigenvalues and suppose that $\sigma(A) \subset \mathbb{D}$.  If $f \in H^\infty$, then
\[\|f(A)\| \le M(\delta_{A}) \max_{z\in\sigma(A)} |f(z)|.\]
\end{theorem}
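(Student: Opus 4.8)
The plan is to combine Earl's interpolation theorem (Theorem~\ref{Earl2}) with von Neumann's inequality to reduce the estimate for $\|f(A)\|$ to a norm bound on a polynomial (or bounded analytic function) evaluated at the contraction $A$. The key observation is that $f(A)$ depends only on the values of $f$ on the spectrum $\sigma(A) = \{\lambda_1,\dots,\lambda_n\}$, since $A$ has distinct eigenvalues and hence $f(A) = X f(\Lambda) X^{-1}$ is determined by $f(\lambda_1),\dots,f(\lambda_n)$. Thus if we can replace $f$ by a function $F \in H^\infty$ that agrees with $f$ on $\sigma(A)$ but has controlled $H^\infty$-norm, then $f(A) = F(A)$ and we may apply von Neumann's inequality to $F$.

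First I would set $w_j := f(\lambda_j)$ for $j = 1,\dots,n$ and observe that $\sup_j |w_j| = \max_{z \in \sigma(A)} |f(z)|$. Since the eigenvalues are distinct and lie in $\mathbb{D}$, the finite sequence $S = (\lambda_j)$ has separation constant $\delta_A = \delta_S > 0$ (a finite product of nonzero pseudohyperbolic distances is strictly positive). Applying Theorem~\ref{Earl2} to the finite interpolation data $(\lambda_j, w_j)$, I obtain an interpolant $F \in H^\infty$ with $F(\lambda_j) = f(\lambda_j)$ for all $j$ and
\[
\|F\|_{H^\infty} \le M(\delta_A)\, \sup_j |w_j| = M(\delta_A) \max_{z \in \sigma(A)} |f(z)|.
\]

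Next, because $F$ and $f$ agree on all eigenvalues of $A$ and $A$ is diagonalizable with $f(A) = X f(\Lambda) X^{-1}$, I have $f(A) = F(A)$. Since $\|A\| \le 1$, von Neumann's inequality (stated in the introduction, and valid for $H^\infty$ functions of a contraction by polynomial approximation) gives
\[
\|f(A)\| = \|F(A)\| \le \|F\|_{H^\infty} \le M(\delta_A) \max_{z \in \sigma(A)} |f(z)|,
\]
which is the desired conclusion. The main subtlety to address carefully is the justification that $f(A) = F(A)$: one must confirm that agreement on the spectrum suffices, which follows from the diagonalization $A = X\Lambda X^{-1}$ and the fact that the functional calculus here only sees the values $f(\lambda_j)$. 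A secondary point is ensuring von Neumann's inequality applies to $F \in H^\infty$ rather than merely to polynomials; this is handled by the standard approximation of $H^\infty$ functions by polynomials on $\overline{\mathbb{D}}$ together with the continuity of the functional calculus on a fixed finite-dimensional contraction. I expect the interpolation step itself to be entirely routine once Earl's theorem is invoked, so the only real care needed is in these two functional-calculus justifications.
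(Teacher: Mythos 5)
Your proposal is correct and follows essentially the same route as the paper's own proof: apply Earl's theorem (Theorem~\ref{Earl2}) to the data $w_j = f(z_j)$ to get an interpolant $F$ with $\|F\|_{H^\infty}\le M(\delta_A)\max_{z\in\sigma(A)}|f(z)|$, note $f(A)=F(A)$ by diagonalizability, and finish with von Neumann's inequality. The extra justifications you flag (positivity of $\delta_A$ for a finite set of distinct points, and extending von Neumann to $H^\infty$ via polynomial approximation) are sound and merely make explicit what the paper leaves implicit.
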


By rescaling, there is a version of Theorem \ref{crouzeix} for general matrices with distinct eigenvalues and $\sigma(A)\subset \{z\in\mathbb{C}\colon |z|<\|A\|\}$. Note however, that $\delta_{A}$ is not invariant under mappings $A\mapsto cA$ for $c>0$.

\begin{proof}
Write $\sigma(A) = \{z_1, \ldots, z_n\}$. By Theorem~\ref{Earl2} applied to $w_j := f(z_j)$, there exists $F \in H^\infty$ such that $F(z_j) = f(z_j)$ for all $j$ and $\|F\|_{H^\infty}\le M(\delta_{A}) \max_j |f(z_j)|$. Since $f = F$ on $\sigma(A)$ and the eigenvalues of $A$ are distinct, we have $f(A) = F(A)$. And, since $A$ is a contraction, von Neumann's inequality yields $\|F(A)\| \le \|F\|_{H^\infty}$. Putting this together we have
\[\|f(A)\| = \|F(A)\| \le \|F\|_{H^\infty} \le M(\delta_{A}) \max_{z\in\sigma(A)} |f(z)|,\]
the desired bound.
\end{proof}
In Theorem \ref{Earl2}, the $M(\delta)$
is a decreasing function of $\delta$ that tends to $1$ as $\delta \to 1^-$. 
Thus, in Theorem~\ref{crouzeix}, when the eigenvalues of $A$ are far apart, pseudohyperbolically speaking, the constant $M(\delta_{A})$ is close to $1$ and we need only consider the behavior of $f$ on the spectrum of $A$ to get an estimate on $\|f(A)\|$. 
Moreover, a computation shows that for $\delta \ge 2\sqrt{2}/3$, we have $M(\delta) \le 2$. Thus, for matrices with well-separated eigenvalues, the following strong form of Crouzeix's conjecture holds.

\begin{corollary}\label{cor:crouzeix} Let $A$ be an $n \times n$ matrix with $\| A \| \le 1$, $\sigma(A) = \{z_1, \ldots, z_n\}\subseteq \mathbb{D}$, and $\delta_{A} \ge 2\sqrt{2}/3$. Then for $f \in H^\infty$, we have
\[\|f(A)\| \le 2 \max_j |f(z_j)|.\]
\end{corollary}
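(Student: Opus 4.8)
The plan is to derive this directly from Theorem~\ref{crouzeix}, reducing everything to a scalar inequality on the constant $M(\delta)$. Since $A$ satisfies all the hypotheses of Theorem~\ref{crouzeix} (it is a contraction with distinct eigenvalues lying in $\mathbb{D}$), that theorem immediately yields
\[
\|f(A)\| \le M(\delta_A)\max_j |f(z_j)|.
\]
Thus it suffices to show that the separation hypothesis $\delta_A \ge 2\sqrt{2}/3$ forces $M(\delta_A) \le 2$; the claimed bound then follows by substitution.

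First I would record the monotonicity of $M(\delta) = \left(1/\delta + \sqrt{1/\delta^2 - 1}\right)^2$, as already noted in the discussion following Theorem~\ref{crouzeix}: as $\delta$ increases on $(0,1]$, both $1/\delta$ and $\sqrt{1/\delta^2-1}$ decrease, so their sum decreases, and hence so does its square. This monotonicity reduces the task to evaluating $M$ at the single threshold value $\delta = 2\sqrt{2}/3$, since $\delta_A \ge 2\sqrt{2}/3$ then gives $M(\delta_A) \le M(2\sqrt{2}/3)$.

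Next I would carry out the one-line computation at $\delta = 2\sqrt{2}/3$. Here $1/\delta^2 = 9/8$, so $1/\delta^2 - 1 = 1/8$, and
\[
1/\delta + \sqrt{1/\delta^2 - 1} = \tfrac{3\sqrt{2}}{4} + \tfrac{\sqrt{2}}{4} = \sqrt{2},
\]
whence $M(2\sqrt{2}/3) = (\sqrt{2})^2 = 2$. Combining this with the monotonicity gives $M(\delta_A) \le 2$, and inserting this into the bound from Theorem~\ref{crouzeix} completes the argument.

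There is no genuine obstacle here: the corollary is an immediate specialization of Theorem~\ref{crouzeix}, and the only content is recognizing that $2\sqrt{2}/3$ is precisely the value at which the decreasing function $M$ crosses the level $2$. The work is therefore entirely in the two elementary facts above, namely the monotonicity of $M$ and the exact evaluation $M(2\sqrt{2}/3) = 2$.
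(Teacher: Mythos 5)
Your proposal is correct and follows exactly the paper's route: the paper likewise derives the corollary by combining Theorem~\ref{crouzeix} with the observation (stated just before the corollary) that $M(\delta)$ is decreasing and that $M(\delta)\le 2$ for $\delta\ge 2\sqrt{2}/3$, and your computation $1/\delta+\sqrt{1/\delta^2-1}=\tfrac{3\sqrt{2}}{4}+\tfrac{\sqrt{2}}{4}=\sqrt{2}$, hence $M(2\sqrt{2}/3)=2$, is precisely the ``computation'' the paper alludes to without writing out. Your remark that the distinctness hypothesis of Theorem~\ref{crouzeix} is satisfied is also consistent with the paper, since $\delta_A\ge 2\sqrt{2}/3>0$ forces the eigenvalues to be distinct.
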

 Note, however, that the assumption $\delta_A \geq 2 \sqrt{2} / 3 \approx 0.9428$ implies that the pseudohyperbolic
distance between each pair of eigenvalues is at least $(2 \sqrt{2} / 3 )^{1/(n-1)}$.  If the eigenvalues are uniformly
distributed around a circle of radius $r$ about the origin, for example, then when $n=2$, this means that $r$ must
be greater than about $0.707$; for $n=3$, $r > 0.861$; for $n=5$, $r > 0.942$; for $n=10$, $r > 0.981$;
for $n=100$, $r > 0.9995$, etc.  If $A$ 
 has an eigenvalue at the origin, then all other eigenvalues must have magnitude at least $(2 \sqrt{2} / 3 )^{1/(n-1)}$. 

The bound in Corollary \ref{cor:crouzeix} implies, under the assumptions, that the matrix $A$ is
near normal, in the sense of having a well-conditioned eigenvector matrix.
To see this, suppose that
\begin{equation}\label{eqn:normboundC}
\|p(A)\| \le C \max_{j} |p(z_j)| \quad \text{ for all } p \in \mathbb{C}[z].
\end{equation}
Write $p(A) = X p( \Lambda ) X^{-1}$ in the following form:
\[p(A) = \sum_{j = 1}^n p(z_j) x_j y_j^\ast,\] where $x_j$ is the $j$th column of $X$ and $y_j^\ast$ is the $j$th row of $X^{-1}$.
Now choose a polynomial $p_j$ such that $p_j(z_j) = 1$ and $p_j(z_k) = 0$ for $k \ne j$.
Applying inequality (\ref{eqn:normboundC}) to $p_j$, we see that $\| x_j y_j^{*} \| \leq C$,
whence 
\[ \| x_j \| \| y_j \|  = \|x_j\| \sup_{||z||=1} |y_j^\ast z| = \sup _{||z||=1} \| (x_j y_j^\ast)z \| = \| x_j y_j^\ast\| \leq C.\]
If each column of $X$ is taken to be of $2$-norm $1$, then each row of $X^{-1}$ has $2$-norm at most $C$. Therefore, the Frobenius norm of $X$ is at most $\sqrt{n}$ and the Frobenius norm of $X^{-1}$ is at most $\sqrt{n} C$. Since the operator norm of a matrix is less than or equal to the Frobenius norm, we have
\begin{equation}\eta(A) \le \kappa(X) = \|X\| \cdot \|X^{-1}\| \le n C,\label{kappabound1} \end{equation}
where $\eta(A)$ is the quantity defined in 
\eqref{eqn:k}.
Thus, under the assumptions of Corollary \ref{cor:crouzeix}, $\kappa (X) \leq 2n$. Actually, a somewhat stronger relation is known between the best-conditioned
eigenvector matrix in the operator norm and the best-conditioned eigenvector matrix in the 
Frobenius norm.  It is shown in \cite{RASmith} that
\[
n-2 + \kappa + \frac{1}{\kappa} \leq \kappa_F ,
\]
where $\kappa$ is the operator norm condition number and $\kappa_F$ is the Frobenius norm
condition number.  It follows that inequality (\ref{kappabound1}) can be replaced by
\[
\eta (A) \leq \frac{1}{2} \left( nC - n + 2 + \sqrt{( nC - n + 2 )^2 -4} \right) \leq nC - n + 2 ,
\]
and if $C=2$, then $\eta (A) \leq n+2$.

In fact a stronger bound on $\kappa (X)$ may be given when we interpolate with
Blaschke products instead of polynomials.
This estimate relates $\kappa(X)$ directly to the
separation constant $\delta_{A}$.
It requires a more general version of von Neumann's inequality for
holomorphic functions which follows from the same approximation
argument already used above.
The finite Blaschke products $h_j$ of degree $n-1$ defined by
\[h_j (z) := \frac{1}{\delta_j} \prod_{k \neq j}\frac{z - z_k}{1 - \overline{z}_k z} ,\quad
\delta_j := \prod_{k \neq j} \frac{z_j - z_k}{1 - \overline{z}_k z_j} ,\]
are the minimal norm interpolants that are $1$ at $z_j$ and $0$ at the other
eigenvalues of $A$. At the spectrum of $A$ they attain the same values
as the $p_j$, and hence $h_j (A) = p_j (A)$. Since $A$ is a contraction,
the generalized von Neumann's inequality yields
\[
\| p_j (A) \| = \| h_j (A) \| \leq \| h_j \|_{H^{\infty}} = 1/ | \delta_j |.
\]
Arguing the same way as above, we get that
the Frobenius norm of $X$ is at most $\sqrt{n}$ and that of $X^{-1}$ is at most
$\sqrt{ \sum_{j=1}^n 1/ | \delta_j |^2 }$;
thus the condition number of $X$ (in either the Frobenius norm or the operator norm) satisfies
\begin{equation}
\eta(A) \le \kappa (X) \leq \sqrt{n}~\sqrt{ \sum_j \frac{1}{| \delta_j |^2} }
\leq \frac{n}{\delta_{A}} .
\label{kappaofX}
\end{equation}
Using the result in \cite{RASmith}, we can subtract $n-2$ from the right-hand side of
(\ref{kappaofX}) to obtain a stronger bound on $\eta (A)$.

\subsection{Bounds via Condition Numbers}  \label{sec:cc2}

As before, let $A$ be an $n \times n$ matrix with distinct eigenvalues and decomposition $A = X \Lambda X^{-1}$. If the quantity $\eta(A)$  from \eqref{eqn:k} satisfies  $\eta(A) \le 2$, then Crouzeix's conjecture immediately holds for $A$. In general,  $\eta(A)$  can be arbitrarily large. However,  it is possible to obtain bounds on  $\eta(A)$ in the important case where $A$ is a matrix representation of a compressed shift $S_{\Theta}$. For additional background material concerning compressed shifts and their matrix representations, we refer the reader to \cite{GRSU} and Chapter $12$ in \cite{GMR}. 

To that end, let $\Theta$ be a finite Blaschke product with distinct zeros $z_1, \ldots, z_n \in \mathbb{D}$ and let  $b_{z_k}(z) = \frac{z - z_k}{1 - \overline{z}_kz}$ denote a single Blaschke factor.  A useful basis of $K_{\Theta}$ is the Takenaka-Malmquist basis\footnote{The name of this basis is not standard. According to \cite{GRSU} these appeared in Takenaka's 1925 paper, \cite{Ta25}. The text \cite{N86} discusses this basis for the case including infinite Blaschke products and uses the term ``Malmquist-Walsh'' basis.}, defined as follows
\[\varphi_1(z) := \frac{\sqrt{1 - |z_1|^2}}{1 - \overline{z_1}z},
~\mbox{and}~
\varphi_k(z) = \left(\prod_{j = 1}^{k-1} b_{z_j}\right) \frac{\sqrt{1 - |z_k|^2}}{1 - \overline{z_k}z}, \text{ for } k=2, \dots, n.\]
Writing $S_{\Theta}$ with respect to the Takenaka-Malmquist basis gives the matrix representation $M_{\Theta}$ where
\[[M_{\Theta}]_{i,j} = \left\{ \begin{array}{ll} z_i  & \text{ if } i=j \\
\prod_{k=i+1}^{j-1} (-\bar{z}_k) \sqrt{1-|z_i|^2} \sqrt{1-|z_j|^2} & \text{ if } i < j \\
0 & \text{ if } i >j \end{array} \right..\]

For example, if $\deg \Theta = 4$, then
\[ M_{\Theta} = \left[ \begin{array}{cccc} z_1 & \sqrt{1-|z_1|^2} \sqrt{1-|z_2|^2} & -\bar{z}_2 \sqrt{1-|z_1|^2} \sqrt{1-|z_3|^2} & \bar{z}_2 \bar{z_3}  \sqrt{1-|z_1|^2} \sqrt{1-|z_4|^2}  \\
0 & z_2 &  \sqrt{1-|z_2|^2} \sqrt{1-|z_3|^2} & -\bar{z}_3  \sqrt{1-|z_2|^2} \sqrt{1-|z_4|^2}  \\
0 & 0 & z_3 &  \sqrt{1-|z_3|^2} \sqrt{1-|z_4|^2} \\
0 & 0& 0& z_4 \end{array} \right].\]
Let $\Lambda$ be the diagonal matrix with diagonal entries $z_1, \dots, z_n$.  If $\Theta$ has distinct zeros, then $M_{\Theta}$ has distinct eigenvalues and so can be written as $X \Lambda X^{-1}$ for some matrix $X$.  Here are tractable formulas for $X$ and $X^{-1}$. The proof appears in the appendix in Section~\ref{sec:app}.

\begin{theorem} \label{thm:eigenvectors} Let $\Theta$ be a finite Blaschke product with distinct zeros $z_1, \dots, z_n$. Let $M_\Theta$ be the matrix representation of $S_\Theta$ with respect to the Takenaka-Malmquist basis. Then $M_{\Theta} = X \Lambda X^{-1}$, where  $\Lambda$ is the diagonal matrix with $\Lambda_{ii} = z_i$ for $1 \le i \le n$ and the entries of $X$ and $X^{-1}$ are given by
\[
\begin{aligned} X_{ij} &= \left\{ \begin{array}{ll} 1 & \text{ if } i=j \\
 \frac{ \sqrt{1-|z_i|^2} \sqrt{1-|z_j|^2}}{z_j - z_i}\prod_{k=i+1}^{j-1} \left( \frac{1-\bar{z}_k z_j}{z_j - z_k}\right) & \text{ if } i < j \\
0 & \text{ if } i >j \end{array} \right. \\
\\
X^{-1}_{ij} &= \left\{ \begin{array}{ll} 1 & \text{ if } i=j \\
 \frac{ \sqrt{1-|z_i|^2} \sqrt{1-|z_j|^2}}{z_i - z_j}\prod_{k=i+1}^{j-1} \left( \frac{1-\bar{z}_k z_i}{z_i- z_k}\right) & \text{ if } i < j \\
0 & \text{ if } i >j \end{array} \right.
\end{aligned}\]
\end{theorem}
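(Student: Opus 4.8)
The plan is to exhibit the columns of $X$ as eigenvectors of $M_\Theta$ and the rows of $X^{-1}$ as the corresponding left eigenvectors, reading both families off from the function theory of $K_\Theta$, and then to check that these two families are biorthogonal with the stated normalization. Since $\Theta$ has distinct zeros, $M_\Theta$ is triangular with distinct diagonal entries $z_1,\dots,z_n$ and is therefore diagonalizable; so it suffices to produce, for each $j$, a right eigenvector for $z_j$ whose $j$th entry equals $1$ (this becomes column $j$ of $X$) and a left eigenvector for $z_j$ whose $j$th entry equals $1$ (this becomes row $j$ of $X^{-1}$), and then to verify that $X^{-1}X = I$.

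For the columns of $X$, I would use the reproducing kernel of $K_\Theta$ at $z_j$, namely $k_{z_j}(z) = 1/(1-\overline{z_j}z)$ (which lies in $K_\Theta$ because $\Theta(z_j)=0$), together with the fact that $k_{z_j}$ is an eigenvector of $S_\Theta^*$ with eigenvalue $\overline{z_j}$. Because $M_\Theta$ is the transpose of the standard matrix of $S_\Theta$ (one checks directly that $[M_\Theta]_{ij}=\langle S_\Theta\varphi_i,\varphi_j\rangle$), the vector with entries $\varphi_i(z_j)=\langle\varphi_i,k_{z_j}\rangle$, obtained from the reproducing property, is a right eigenvector of $M_\Theta$ for $z_j$. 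Dividing by $\varphi_j(z_j)$ to normalize the $j$th entry to $1$ gives $X_{ij}=\varphi_i(z_j)/\varphi_j(z_j)$; substituting the definition of $\varphi_i$ and cancelling the common factor $\prod_{l<i}b_{z_l}(z_j)$ should collapse this to the claimed product. Triangularity, $X_{ij}=0$ for $i>j$, is immediate, since $b_{z_j}$ divides the inner factor of $\varphi_i$ when $i>j$, so that $\varphi_i(z_j)=0$.

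For the rows of $X^{-1}$, I would introduce the functions $g_j(z)=\big(\prod_{k\ne j}b_{z_k}(z)\big)/(1-\overline{z_j}z)\in K_\Theta$ and check that they are the eigenfunctions of $S_\Theta$ itself: the one-line computation $(z-z_j)\,g_j=\Theta\in\Theta H^2$ gives $S_\Theta g_j=P_\Theta(z g_j)=z_j g_j$. Hence the coordinate vector $(\langle g_j,\varphi_i\rangle)_i$ is a left eigenvector of $M_\Theta$ for $z_j$; evaluating these inner products (a residue computation in which the inner Blaschke factors common to $g_j$ and $\varphi_i$ cancel and the reproducing property leaves a value at $z_i$) and normalizing the $j$th entry to $1$ should reproduce the stated formula for $X^{-1}$. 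Triangularity in the other direction, $X^{-1}_{ij}=0$ for $i>j$, follows because projecting $g_i$ against $\varphi_j$ with $j<i$ leaves a surviving factor $b_{z_j}$ that vanishes at $z_j$.

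The main obstacle, and the step I would be most careful about, is confirming that these two separately normalized families really are mutually inverse, i.e. $X^{-1}X=I$, rather than merely diagonalizing $M_\Theta$ up to an unknown diagonal scaling. Via Parseval this reduces to the biorthogonality relation $\langle g_i,k_{z_j}\rangle=g_i(z_j)$, which vanishes for $i\ne j$ (since $b_{z_j}$ divides $\prod_{k\ne i}b_{z_k}$ when $i\ne j$) and is nonzero for $i=j$, together with the normalization identity $g_i(z_i)=\langle g_i,\varphi_i\rangle\,\varphi_i(z_i)$ that forces the $i$th diagonal entry of $X^{-1}X$ to equal $1$. Establishing this identity, and matching the residue evaluations of $\langle g_j,\varphi_i\rangle$ and the ratios $\varphi_i(z_j)/\varphi_j(z_j)$ to the explicit telescoping products, is where essentially all the bookkeeping resides. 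A more computational alternative that bypasses the function theory is to verify the two matrix identities $M_\Theta X=X\Lambda$ and $X X^{-1}=I$ directly from the explicit entries; each reduces to a telescoping-product identity that is elementary but considerably less transparent.
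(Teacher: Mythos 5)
Your proposal is correct, but it takes a genuinely different route from the paper. The paper's proof (in the appendix, Section~\ref{sec:app}) is exactly the ``computational alternative'' you mention in your last paragraph: it verifies $M_\Theta\,\mathrm{Col}_j X = z_j\,\mathrm{Col}_j X$ entry by entry, reducing the nontrivial case $i<j$ to a telescoping-product identity (Lemma~\ref{lem:tech}) proved by induction, and then asserts, with details omitted, that a similar computation gives $TX=I$ for the claimed inverse $T$. Your function-theoretic derivation instead explains where the formulas come from: the columns of $X$ are the normalized coordinate vectors $(\varphi_i(z_j))_i$ of the kernels $k_{z_j}$ (using, correctly, that the paper's upper-triangular $M_\Theta$ is the transpose of the standard coordinate matrix of $S_\Theta$, i.e.\ $[M_\Theta]_{ij}=\langle S_\Theta\varphi_i,\varphi_j\rangle$ --- a convention check that is essential and that you rightly flag), and the rows of $X^{-1}$ come from the eigenfunctions $g_j=\bigl(\prod_{k\ne j}b_{z_k}\bigr)k_{z_j}$ of $S_\Theta$ itself. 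Your worry about the mutual-inverse step resolves more easily than you suggest: by Parseval, $(X^{-1}X)_{ij}$ equals $g_i(z_j)/\bigl(\langle g_i,\varphi_i\rangle\,\varphi_j(z_j)\bigr)$, and the numerator vanishes for $i\ne j$ since $b_{z_j}$ divides $g_i$ (this is also automatic linear algebra: left and right eigenvectors for distinct eigenvalues are biorthogonal), while the diagonal normalization $g_i(z_i)=\langle g_i,\varphi_i\rangle\,\varphi_i(z_i)$ is a two-line computation --- cancelling the common inner factor $\prod_{l<i}b_{z_l}$ gives $\langle g_i,\varphi_i\rangle=\prod_{k>i}b_{z_k}(z_i)/\sqrt{1-|z_i|^2}$ and $\varphi_i(z_i)=\prod_{l<i}b_{z_l}(z_i)/\sqrt{1-|z_i|^2}$, so both sides equal $\prod_{k\ne i}b_{z_k}(z_i)/(1-|z_i|^2)$. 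The deferred evaluations behind your phrases ``should collapse'' and ``should reproduce'' are likewise routine (cancellation of common inner factors, the reproducing property, and one partial-fraction step) and do yield the stated products; I checked the $n=2$ case explicitly and it matches. As for what each approach buys: the paper's argument is elementary and self-contained but opaque, and it leaves the verification of $TX=I$ to the reader; yours makes the triangularity, the invertibility, and the origin of the entries transparent, and replaces the induction lemma by standard model-space facts, at the cost of the Toeplitz/residue bookkeeping in the inner-product evaluations.
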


\begin{remark} This theorem gives an initial bound on the condition number of $X$, namely
 \[\kappa( X ) \le \frac{n(n+1)}{2} ( \max_{i,j} | X_{ij} | )
( \max_{i,j} | X^{-1}_{ij} | ).\]
If the zeros of $\Theta$  are sufficiently separated (in the Euclidean and pseudohyperbolic metrics) and at least $n-1$ are sufficiently near the unit circle $\mathbb{T}$, then the formulas in Theorem \ref{thm:eigenvectors} show that we can make the off-diagonal entries of $X$ and $X^{-1}$ arbitrarily close to $0$ and hence the
condition number of $X$ arbitrarily close to $1$.
 \end{remark}

Using the formulas for $X$ and $X^{-1}$, we can also obtain the following bound on the condition number of $X$. Note that this also provides a bound on $\eta(M_\Theta)$
for the matrix representation $M_\Theta$ of the compressed shift $S_\Theta$.

 \begin{theorem} \label{thm:condition} If the eigenvector matrix $X$ is given
as in Theorem~\ref{thm:eigenvectors},
then
\[
\eta(M_\Theta) \le \kappa(X) \le  \frac{8}{\delta_{\Theta}^6}
\Big( 1 - 2\log \delta_{\Theta}\Big),
\]
 where $\eta(M_{\Theta})$ denotes the minimal condition number of $M_{\Theta}$ defined in \eqref{eqn:k}  and 
 $\delta_{\Theta}$ denotes the separation constant of the
zeros of $\Theta$ defined in \eqref{eqn:delta1}.
\end{theorem}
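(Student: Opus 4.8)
The plan is to reduce the condition-number bound to operator-norm estimates for $X$ and $X^{-1}$ and then control those estimates entirely through the pseudohyperbolic geometry of the zeros. Since $M_\Theta=X\Lambda X^{-1}$ already exhibits $X$ as one eigenvector matrix, the inequality $\eta(M_\Theta)\le\kappa(X)=\|X\|\cdot\|X^{-1}\|$ is immediate from the definition in \eqref{eqn:k}, so it suffices to bound $\|X\|$ and $\|X^{-1}\|$. First I would rewrite the entries of Theorem~\ref{thm:eigenvectors} in terms of $\rho$. Using $\rho(z,w)=\big|\tfrac{z-w}{1-\overline{w}z}\big|$ together with the identity $1-\rho(z,w)^2=\frac{(1-|z|^2)(1-|w|^2)}{|1-\overline{w}z|^2}$, one obtains, for $i<j$,
\[
|X_{ij}|=\frac{\sqrt{1-\rho(z_i,z_j)^2}}{\rho(z_i,z_j)}\prod_{k=i+1}^{j-1}\frac{1}{\rho(z_j,z_k)},\qquad
|X^{-1}_{ij}|=\frac{\sqrt{1-\rho(z_i,z_j)^2}}{\rho(z_i,z_j)}\prod_{k=i+1}^{j-1}\frac{1}{\rho(z_i,z_k)},
\]
with both matrices upper triangular and unit diagonal. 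There is a pleasant symmetry: the products for $X$ measure distances from the column node $z_j$, those for $X^{-1}$ from the row node $z_i$, so the row sums of $X^{-1}$ are structurally identical to the column sums of $X$ with the indices interchanged, and bounding one family of sums handles both matrices.

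Next I would extract the two consequences of $\delta_\Theta>0$ that drive the estimate. Since $\delta_\Theta=\min_m\prod_{k\ne m}\rho(z_m,z_k)$ and each factor is at most $1$, we have (a) every pairwise distance satisfies $\rho(z_m,z_k)\ge\delta_\Theta$, whence every partial product $\prod_{k=i}^{j-1}\rho(z_j,z_k)^{-1}$ is bounded above by $1/\delta_\Theta$; and (b), taking logarithms in $\prod_{k\ne m}\rho(z_m,z_k)\ge\delta_\Theta$ and using $-\log t\ge\frac12(1-t^2)$ on $(0,1]$, the logarithmic bound $\sum_{k\ne m}\big(1-\rho(z_m,z_k)^2\big)\le -2\log\delta_\Theta$ for every $m$. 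Fact (b) is the $n$-free quantity that produces the factor $1-2\log\delta_\Theta$; fact (a) produces the powers of $1/\delta_\Theta$.

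I would then bound the operator norms by $\|X\|\le\sqrt{\|X\|_1\,\|X\|_\infty}$ and $\|X^{-1}\|\le\sqrt{\|X^{-1}\|_1\,\|X^{-1}\|_\infty}$, where $\|\cdot\|_1$ and $\|\cdot\|_\infty$ are the maximum absolute column and row sums, reducing everything to a single model estimate: a bound, \emph{uniform in $n$}, for a sum of the form $\sum_{i\ne m}\sqrt{1-\rho(z_i,z_m)^2}\cdot(\text{partial product})$ at a fixed node $z_m$. The strategy is to peel off the factors $\sqrt{1-\rho(z_i,z_m)^2}$ by Cauchy--Schwarz, so that $\big(\sum_{i\ne m}(1-\rho(z_i,z_m)^2)\big)^{1/2}\le\sqrt{-2\log\delta_\Theta}$ supplies the logarithm, while the complementary factor, built from the bounded partial products, is absorbed into powers of $1/\delta_\Theta$ via fact (a). Assembling the four sums and collecting constants is then expected to give $\|X\|\,\|X^{-1}\|\le\frac{8}{\delta_\Theta^6}(1-2\log\delta_\Theta)$.

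The main obstacle is precisely this uniformity in $n$. The crude route---bounding every partial product by $1/\delta_\Theta$ and then estimating $\sum_{i\ne m}\sqrt{1-\rho(z_i,z_m)^2}$ directly---is genuinely $n$-dependent: under the single-node constraint $\prod_{k\ne m}\rho(z_m,z_k)\ge\delta_\Theta$ alone, that sum can be forced to grow like $\sqrt{n}$ (many nodes crowded near one pseudohyperbolic circle about $z_m$), and the natural telescoping $\sum_i\sqrt{\sigma_{i+1}^2-\sigma_i^2}/(\sigma_i\sigma_{i+1})$ likewise admits no $n$-free bound from the local data, since the square root of consecutive differences does not telescope. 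What rescues the estimate is that $\delta_\Theta>0$ is a \emph{global}, interpolation-type condition: the logarithmic bound (b) holds at every node simultaneously, so the mutual separation of the nearby nodes---not just their distance from $z_m$---limits how many can cluster at any scale. The delicate step, and the heart of the proof, is to combine the pointwise product bounds with (b) so that the effective number of non-negligible off-diagonal terms is controlled by $\delta_\Theta$ rather than by $n$; this is what converts the $\ell^2$-type control $\sum(1-\rho^2)\le-2\log\delta_\Theta$ into an honest operator-norm bound and accounts for both the logarithm and the sixth power of $1/\delta_\Theta$.
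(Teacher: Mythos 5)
Your setup is sound as far as it goes---the reduction $\eta(M_\Theta)\le\kappa(X)$, the rewriting of $|X_{ij}|$ and $|X^{-1}_{ij}|$ in terms of $\rho$, and your facts (a) and (b) are all correct---but the proposal has a genuine gap, which you yourself flag: the $n$-uniform estimate at what you call the ``heart of the proof'' is never supplied. Your route through $\|X\|\le\sqrt{\|X\|_1\,\|X\|_\infty}$ needs an $n$-free bound on maximal row/column sums, and after peeling off $\sqrt{1-\rho^2}$ by Cauchy--Schwarz the complementary factor is $\bigl(\sum_i P_i^2\bigr)^{1/2}$ with each partial product $P_i\le 1/\delta_\Theta$, which only yields $\sqrt{n}/\delta_\Theta$; your own heuristic (many nodes crowded on a pseudohyperbolic circle about $z_m$) shows single-node data cannot do better. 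What is missing is exactly the global interpolation-theoretic input the paper uses: Lemma~\ref{lem:M}, the Shapiro--Shields bound $\|G\|^2\le\frac{2}{\delta_\Theta^4}\bigl(1-2\log\delta_\Theta\bigr)$ on the Gramian of the normalized kernels $g_\ell(z)=\sqrt{1-|z_\ell|^2}/(1-\bar z_\ell z)$---equivalently, a Carleson-type embedding constant for the measure with masses $1-|z_k|^2$ at the zeros, depending only on $\delta_\Theta$. Without proving such an embedding your plan cannot close; note also that the summand $1$ in $1-2\log\delta_\Theta$ comes from that lemma, not from your elementary inequality $-\log t\ge\tfrac12(1-t^2)$, which only gives $\sum_{k\ne m}\bigl(1-\rho(z_m,z_k)^2\bigr)\le-2\log\delta_\Theta$.

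The paper, moreover, avoids row-by-row estimates entirely, and this structural difference is what makes its argument work. It bounds $\|X\|\le 2w(X)$ and estimates the full quadratic form: $\langle Xy,y\rangle$ is rewritten as $\bigl\langle \sum_i g_iD_i^-\bar y_i,\ \sum_j g_j\,\overline{C_j^+(z_j)}/\overline{B_j(z_j)}\,\bar y_j\bigr\rangle$, where the functions $g_iD_i^-$ form the orthonormal Takenaka--Malmquist system; a single global Cauchy--Schwarz then gives $\|y\|$ times $\delta_\Theta^{-1}\bigl\|\sum_j g_j\bar y_j\bigr\|$ (using $|B_j(z_j)|\ge\delta_\Theta$ and $|C_j^+(z_j)|\le1$), and the remaining factor is controlled through the Gramian $G$ via Lemma~\ref{lem:M}. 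This exploits the $\ell^2$ structure \emph{before} any absolute values are taken, and it is also where the constants come from: a factor $2$ from the numerical radius for each of $X$ and $X^{-1}$, together with $\sqrt{2}\,\delta_\Theta^{-3}\sqrt{1-2\log\delta_\Theta}$ from each norm bound, assemble into $8\,\delta_\Theta^{-6}\bigl(1-2\log\delta_\Theta\bigr)$. Your closing claim that collecting the four sums ``is then expected to give'' this bound is therefore a hope rather than an argument: to repair the proposal you would either have to prove the Gramian/Carleson estimate yourself---at which point you may as well run the paper's quadratic-form argument---or find a genuinely new mechanism for an $n$-free row-sum bound, which your own analysis suggests does not exist at the level of per-row data.
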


As the bound in Theorem \ref{thm:condition} does not depend on $n$, it seems better than the bound in
\eqref{kappaofX}
in situations where $n$ is large.  Nevertheless, it includes significant dependence on $\delta_{\Theta}$, and the appearance of the constant $8$ prevents this estimate from being sharp as $\delta_\Theta\to 1$.  These concerns motivate Question~\ref{q:condition}, which was posed in the introduction.

To prove Theorem \ref{thm:condition}, we need the following lemma, which is likely well known.

\begin{lemma} \label{lem:M} Let $\Theta$ be a finite Blaschke product with distinct zeros $z_1, \dots, z_n$,  and let $\delta_{\Theta}$  denote the separation constant of the
zeros of $\Theta$ given in \eqref{eqn:delta1}. Define $g_\ell(z) = \frac{\sqrt{1-|z_\ell|^2}}{1-\bar{z}_\ell z}$ for $\ell=1,\dots, n$ and let $G$ be the $n\times n$ Gramian matrix defined by $G_{ij} = \langle g_i, g_j \rangle_{H^2}$. Then  \[ \| G \|^2 \le \frac{2}{\delta_\Theta^4}\Big(1-2 \log \delta_\Theta \Big).\]
\end{lemma}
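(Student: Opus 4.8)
The plan is to pass from the explicit entries of $G$, through a single scalar estimate coming from the separation constant, to a genuinely operator-theoretic (Carleson-type) bound on $\|G\|$.

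First I would compute the entries. Since $g_\ell=\sqrt{1-|z_\ell|^2}\,k_{z_\ell}$, where $k_w(z)=1/(1-\overline{w}z)$ is the Szeg\H{o} kernel, the reproducing property gives $G_{ij}=\langle g_i,g_j\rangle_{H^2}=\frac{\sqrt{1-|z_i|^2}\sqrt{1-|z_j|^2}}{1-\overline{z_i}z_j}$, and hence, by the identity $1-\rho(z,w)^2=\frac{(1-|z|^2)(1-|w|^2)}{|1-\overline{w}z|^2}$, that $|G_{ij}|=\sqrt{1-\rho(z_i,z_j)^2}$. In particular $G$ is Hermitian and positive semidefinite with unit diagonal, so $\|G\|$ is its largest eigenvalue, equal to $\sup_{\|v\|=1}\big\|\sum_i v_i g_i\big\|_{H^2}^2$. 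It is convenient to reinterpret this quantity operator-theoretically: writing $T\colon H^2\to\ell^2$ with $(Th)_j=\langle h,g_j\rangle_{H^2}=\sqrt{1-|z_j|^2}\,h(z_j)$, one has $\|Th\|_{\ell^2}^2=\sum_j(1-|z_j|^2)|h(z_j)|^2$ and $\|G\|=\|TT^\ast\|=\|T\|^2$, so that $\|G\|$ is exactly the embedding constant of the measure $\mu=\sum_j(1-|z_j|^2)\delta_{z_j}$ into $H^2$.

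The second ingredient is a scalar estimate turning the separation constant into decay. Since $\rho(z_i,z_j)\ge\prod_{k\ne i}\rho(z_i,z_k)\ge\delta_\Theta$, every off-diagonal entry satisfies $|G_{ij}|\le\sqrt{1-\delta_\Theta^2}$; more usefully, applying $1-x\le\log(1/x)$ with $x=\rho(z_i,z_j)^2$ together with the product condition yields, for each fixed $i$, $\sum_j(1-\rho(z_i,z_j)^2)=1+\sum_{k\ne i}(1-\rho(z_i,z_k)^2)\le 1-2\log\!\big(\prod_{k\ne i}\rho(z_i,z_k)\big)\le 1-2\log\delta_\Theta$. Testing $T$ on the normalized kernel $g_w$ at an arbitrary $w\in\mathbb{D}$ produces the same shape, $\|Tg_w\|_{\ell^2}^2=\sum_j(1-\rho(w,z_j)^2)$, so $\sup_{w\in\mathbb{D}}\sum_j(1-\rho(w,z_j)^2)$ is the natural reproducing-kernel test for the embedding, and at the special points $w=z_i$ it is already controlled by $1-2\log\delta_\Theta$.

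The crux — and the step I expect to be the main obstacle — is to bound $\|G\|$ (equivalently the embedding constant of $\mu$) in a way that is \emph{independent of $n$}. This genuinely requires the full product (interpolation/Carleson) hypothesis, not merely pairwise separation: for fixed $\delta_\Theta$ the degree $n$ is unbounded (e.g.\ for interpolating sequences accumulating at $\mathbb{T}$), and the naive estimates $\|G\|\le\max_i\sum_j|G_{ij}|$ and $\|G\|\le\|G\|_{HS}$ both grow with $n$, so cancellation must be exploited. The route I would take is a quantitative reproducing-kernel thesis: bound $\|T\|^2$ by a fixed multiple of $\sup_{w}\sum_j(1-\rho(w,z_j)^2)$ via a Carleson-box / sub-mean-value count, in which separation guarantees that only boundedly many $z_j$ lie in any box and that the remaining contributions decay geometrically away from $w$, and then bound that supremum in terms of $\delta_\Theta$ using the same $1-x\le\log(1/x)$ device. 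The bookkeeping in these two sub-steps is where the explicit constants should appear: the factor $1-2\log\delta_\Theta$ is the per-row bound from the previous paragraph, while the $\delta_\Theta^{-4}$ reflects the loss in passing from the kernel test at the points $z_i$ to the test at arbitrary $w$ and then to the full operator norm. Extracting precisely the stated constant, rather than merely some $C(\delta_\Theta)$, is the delicate part; a symmetric Schur test $\sum_j|G_{ij}|w_j\le\gamma w_i$ with geometry-adapted weights $w_i$ is an alternative that may yield cleaner constants, but its verification again reduces to the separation estimate of the second paragraph.
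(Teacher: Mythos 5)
Your preparatory reductions are correct, and two of them in fact coincide with steps of the paper's proof: the entry formula $G_{ij}=\sqrt{(1-|z_i|^2)(1-|z_j|^2)}\,/\,(1-\overline{z_i}z_j)$ with $|G_{ij}|^2=1-\rho(z_i,z_j)^2$; the identification $\|G\|=\|T\|^2$ with the embedding constant of $\mu=\sum_j(1-|z_j|^2)\delta_{z_j}$, which is exactly the role played in the paper by Proposition 9.5 of \cite{AM02}; and the row estimate $\sum_j(1-\rho(z_i,z_j)^2)\le 1-2\log\delta_\Theta$ via $1-x\le -\log x$, which correctly isolates the source of the logarithmic factor. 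But the step you yourself flag as the crux --- an $n$-independent bound on the embedding constant carrying the specific factor $2/\delta_\Theta^4$ --- is exactly where the proposal stops being a proof: you name two candidate strategies (a Carleson-box/counting argument with geometric decay, or a Schur test with geometry-adapted weights), carry out neither, and concede that extracting the stated constant rather than some unspecified $C(\delta_\Theta)$ is ``the delicate part.'' Since everything preceding that point is elementary, this is a genuine gap, not an alternative route; moreover nothing in either sketch plausibly produces the precise constant, and your intermediate quantity $\sup_{w\in\mathbb{D}}\sum_j(1-\rho(w,z_j)^2)$ is controlled by your row estimate only at the special points $w=z_i$, so an additional, unquantified loss is built into the plan from the start.

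The paper closes this hole by citation and by duality rather than by direct embedding estimates. Lemma 3 of Shapiro--Shields \cite{SS} solves, for arbitrary $\ell^2$ data $(w_k)$, the interpolation problem $g(z_k)(1-|z_k|^2)^{1/2}=w_k$ with $\|g\|_{H^2}^2\le\frac{2}{\delta_\Theta^4}\bigl(1-2\log\delta_\Theta\bigr)\sum_k|w_k|^2$ --- that explicit construction is where the $\delta_\Theta^{-4}$ actually comes from --- and Lemma 1 of \cite{SS} is then invoked to convert this surjectivity-with-constant into the embedding inequality $\sum_k(1-|z_k|^2)|g(z_k)|^2\le\frac{2}{\delta_\Theta^4}\bigl(1-2\log\delta_\Theta\bigr)\|g\|_{H^2}^2$ with the same constant, after which Proposition 9.5 of \cite{AM02} yields the bound on the Gramian. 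So the single ingredient your proposal leaves open is precisely the imported Shapiro--Shields content: to complete your program you would either have to reprove their interpolation construction or simply cite their Lemmas 1 and 3 at the point where you propose the box count. One further point of care: your (correct) identity $\|G\|=\|T\|^2$ turns the embedding bound into $\|G\|\le\frac{2}{\delta_\Theta^4}\bigl(1-2\log\delta_\Theta\bigr)$, a bound on the first power of $\|G\|$; this is what is actually used later in the proof of Theorem~\ref{thm:condition}, but it is not literally the inequality $\|G\|^2\le\frac{2}{\delta_\Theta^4}\bigl(1-2\log\delta_\Theta\bigr)$ stated in the lemma, so you should be explicit about which power your argument delivers.
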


\begin{proof} By Lemma $3$ in \cite{SS}, if $(w_k)$ is a square summable sequence, then there is a $g \in H^2(\mathbb{D})$ such that
\[ \| g \|_{H^{2}}^2 \le \frac{2}{\delta_\Theta^4}\big( 1- 2 \log \delta_\Theta\big) \sum_{k=1}^{\infty} |w_k|^2 \ \text{ and } \ g(z_k) (1-|z_k|^2)^{1/2}= w_k \text{ for } k=1,2,\dots.\]
By Lemma $1$ in \cite{SS}, we can conclude that
\[ \sum_{k=1}^{\infty} |g(z_k)|^2 (1-|z_k|^2) \le \frac{2}{\delta_\Theta^4}\Big(1-2 \log \delta_\Theta\Big) \| g\|^2_{H^2} \text{ for all } g\in H^2(\mathbb{D}).\]
Then Proposition $9.5$ in \cite{AM02} implies that $\| G\|^2 \le \frac{2}{\delta_\Theta^4}\Big(1-2 \log \delta_\Theta\Big)$, which completes the proof.
\end{proof}

\begin{proof}[Proof of Theorem \ref{thm:condition}] We use the estimate $\| X\| \le 2 w(X),$ where $w(X)$ denotes the numerical radius. Then fixing $y \in \mathbb{C}^n$ with $\| y \|=1$, we have
\begin{equation} \label{eqn:ws} | \langle X y , y \rangle |= \left| \sum_{j=1}^n \sum_{i <j} \frac{\sqrt{1-|z_i|^2}\sqrt{1-|z_j|^2}}{1-\bar{z}_i z_j} \prod_{k=i}^{j-1} \left( \frac{ 1-\bar{z}_k z_j}{z_j-z_k} \right)\bar{y}_i y_j + \sum_{j=1}^n | y_j|^2 \right|.\end{equation}
For any $\ell =1, \dots, n$, define the functions
\[ g_\ell(z) = \frac{\sqrt{1-|z_\ell|^2}}{1-\bar{z}_\ell z}, \quad B_{\ell}(z) = \prod_{k \ne \ell} \frac{z-z_k}{1-\bar{z}_kz}, \quad C^+_{\ell}(z) = \prod_{k>\ell} \frac{z-z_k}{1-\bar{z}_kz},  \quad  D^-_{\ell}(z) = \prod_{k<\ell} \frac{z-z_k}{1-\bar{z}_kz}.\]
Then \eqref{eqn:ws} can be rewritten as:
\[
| \langle X y , y \rangle | = \left |  \sum_{j=1}^n \sum_{i <j} \langle g_i, g_j \rangle \frac{ C_j^+(z_j) D^-_i(z_j)}{B_j(z_j)} \bar{y}_i y_j + \sum_{j=1}^n | y_j|^2 \right| \\
= \left |  \sum_{i,j=1}^n\langle g_i, g_j \rangle \frac{ C_j^+(z_j) D^-_i(z_j)}{B_j(z_j)} \bar{y}_i y_j  \right|,
\]
where we used the fact that if $i=j$, then $\langle g_i, g_j \rangle \frac{ C_j^+(z_j) D^-_i(z_j)}{B_j(z_j)}=1$ and if $i>j$, then $D^-_i(z_j)=0$. Furthermore, observe that each $\langle g_k D^-_k, g_{\ell} D^-_{\ell} \rangle = \delta_{k\ell}.$ This and Lemma \ref{lem:M} give:
\[
\begin{aligned}
| \langle X y, y \rangle | &= \left|
 \left \langle  \sum_{i=1}^n g_i D^-_i \bar{y}_i,  \sum_{j=1}^n g_j  \frac{ \overline{C_j^+(z_j)}}{\overline{B_j(z_j)}}
 \bar{y}_j \right \rangle \right | \\
 &\le \left \|  \sum_{i=1}^n g_i D^-_i\bar{y}_i \right \| \cdot \left \| \sum_{j=1}^n g_j  \frac{ \overline{C_j^+(z_j)}}{\overline{B_j(z_j)}}
 \bar{y}_j   \right \| \\
 & \le \frac{1}{\delta_{\Theta}} \left(  \sum_{i=1}^n  \| g_i D^-_i \bar{y}_i\|^2 \right )^{1/2}   \left \|  \sum_{j=1}^n g_j
 \bar{y}_j   \right \|\\
&  \le  \frac{\sqrt{2}}{\delta_\Theta^3}\sqrt{1-2 \log \delta_\Theta} \| y \|^2 =  \frac{\sqrt{2}}{\delta_\Theta^3}\sqrt{1-2 \log \delta_\Theta}.
\end{aligned}
\]
This shows that $\| X \| \le 2  \frac{\sqrt{2}}{\delta_\Theta^3}\sqrt{1-2 \log \delta_\Theta},$ and an analogous argument gives the same bound for $\|X^{-1}\|$.
\end{proof}

\section{Examples of Extremal Functions and Vectors} \label{sec:extremal}
Let $\Omega$ be a bounded simply connected domain with smooth boundary containing the spectrum of an $n \times n$ matrix $A$. We consider functions $f \in H_1^\infty(\Omega)$, the closed unit ball in $H^{\infty}(\Omega)$, for which $\sup_f\|f(A)\|$, taken over all $f\in H_1^\infty(\Omega)$, is attained. Recall that such a function is called \emph{extremal} for $(A,\Omega)$ and if, furthermore,  $x$ is a non-zero vector where $\| f(A) x \| = \| f(A) \| \| x\| $, then $x$ is called an associated \emph{extremal} vector. As discussed in the introduction, the study of such functions is closely related to recent proofs and investigations of Crouzeix's conjecture.

One can also measure the size of $f(A)$ via its numerical radius. Given $(A, \Omega)$, we say that $f$ is \emph{$w$-extremal}, if  $f \in H_1^\infty(\Omega)$ is a function for which $\sup_f w(f(A))$, taken over all $f\in H_1^\infty(\Omega)$, is attained. A vector $y$ is an associated \emph{$w$-extremal} vector if $| \langle f(A) y/\| y\| ,y/\|y\|\rangle | =  w(f(A)).$

In this section, we consider two classes of examples of extremal functions and vectors.

\subsection{Compressions of the Shift $S_{\Theta}$ with $\Omega = \mathbb{D}$} \label{sec:comp}

Let $\Theta$ be a finite Blaschke product. Recall that $K_\Theta = H^2 \ominus \Theta H^2$ and the compression of the shift with symbol $\Theta$ is defined by $S_\Theta = P_\Theta S|_{K_\Theta}$, where $P_\Theta$ is the orthogonal projection of $H^2$ onto $K_\Theta$ and $S$ is the shift operator.  In \cite[Theorem 2, pp. 22]{GR09}, Garcia and Ross showed that the extremal functions for  $(S_\Theta, \mathbb{D})$ are exactly the finite Blaschke products $B$ with $\deg B < \deg \Theta$. We encode their result in the following theorem:

\begin{theorem}\label{thm:norm} Let $\Theta$ be a finite Blaschke product with $\deg \Theta =n$ and let  $f \in H^{\infty}_1(\mathbb{D})$. Then $\| f(S_{\Theta}) \| \le 1$. Moreover $\| f(S_{\Theta}) \| =1$ if and only if $f$ is a finite Blaschke product with $\deg f < \deg \Theta.$ \end{theorem}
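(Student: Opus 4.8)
The plan is to reduce the whole statement to the functional-calculus identity
\[
\|f(S_\Theta)u\|^2 = \|P_\Theta(fu)\|^2, \qquad u\in K_\Theta,
\]
where $f(S_\Theta)=P_\Theta M_f|_{K_\Theta}$ and $M_f$ denotes multiplication by $f$ on $H^2$ (the Sz.-Nagy--Foias functional calculus for the model operator). Since $P_\Theta$ is a projection and $\|fu\|\le\|f\|_{H^\infty}\|u\|\le\|u\|$, this already gives $\|f(S_\Theta)u\|\le\|u\|$ and hence $\|f(S_\Theta)\|\le1$ for every $f\in H^\infty_1(\mathbb{D})$. The two implications of the ``moreover'' statement will each come from tracking when these inequalities are equalities.

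For the \emph{if} direction, suppose $f=B$ is a finite Blaschke product with $\deg B=d<n$. Because $B$ is inner, $M_B$ is an isometry, and the splitting $H^2=K_\Theta\oplus\Theta H^2$ yields
\[
\|B(S_\Theta)u\|^2 = \|u\|^2 - \|(I-P_\Theta)(Bu)\|^2,
\]
so $\|B(S_\Theta)u\|=\|u\|$ exactly when $Bu\in K_\Theta$. Thus $\|B(S_\Theta)\|=1$ if and only if $K_\Theta\cap BK_\Theta\neq\{0\}$. To force a nontrivial intersection I would invoke the standard orthogonal decompositions $K_{B\Theta}=K_\Theta\oplus\Theta K_B$ and $K_{B\Theta}=K_B\oplus BK_\Theta$, which realize both $K_\Theta$ and $BK_\Theta$ as subspaces of the $(n+d)$-dimensional space $K_{B\Theta}$. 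A dimension count then gives
\[
\dim(K_\Theta\cap BK_\Theta)\ge n+n-(n+d)=n-d>0,
\]
so a nonzero $u$ exists and $\|B(S_\Theta)\|=1$.

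For the \emph{only if} direction, suppose $\|f(S_\Theta)\|=1$. As $f(S_\Theta)$ acts on the finite-dimensional space $K_\Theta$, its norm is attained at some unit vector $u\in K_\Theta$, and I would squeeze the chain
\[
1=\|P_\Theta(fu)\|^2\le\|fu\|^2\le\|f\|_{H^\infty}^2\le1.
\]
Equality throughout forces $\int_{\mathbb{T}}(1-|f|^2)|u|^2\,dm=0$ and $fu\in K_\Theta$. Since $u$ is a nonzero rational function it vanishes only on a finite subset of $\mathbb{T}$, so the first condition gives $|f|=1$ a.e., i.e.\ $f$ is inner. For the degree bound I would use that, when $\Theta$ has distinct zeros $z_1,\dots,z_n$, every element of $K_\Theta$ has the form (polynomial of degree $\le n-1$)$/D$ with $D(z)=\prod_{j=1}^n(1-\overline{z_j}z)$; writing $u=a/D$ and $fu=b/D$ exhibits $f=b/a$ as a rational inner function, hence a finite Blaschke product, whose reduced numerator divides $b$ and therefore has degree $\le n-1<n$.

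The main obstacle is the rigidity step in the \emph{only if} direction: extracting from the single scalar equation $\|f(S_\Theta)u\|=1$ \emph{both} that $f$ is inner \emph{and} that $fu$ remains in $K_\Theta$, and then converting this into the sharp degree bound. The distinct-zeros hypothesis is precisely what makes the final step clean, since it pins down the exact rational form of the elements of $K_\Theta$; in the general case one would instead have to argue through the inner--outer factorization of elements of $K_\Theta$.
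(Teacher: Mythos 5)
Your proof is correct, and it takes a genuinely different route from the paper's. The paper's argument is interpolation-theoretic: it invokes Sarason's generalized interpolation theorem \cite[Proposition 5.1]{Sar67} to produce a unique minimal-norm symbol $\psi$ with $f(S_\Theta)=P_\Theta T_\psi|_{K_\Theta}$ and $\|\psi\|_\infty=\|f(S_\Theta)\|$, notes that $\psi$ is automatically a Blaschke product of degree at most $n-1$ agreeing with $f$ at the $n$ distinct zeros of $\Theta$, and concludes $f=\psi$ from uniqueness of the solution of the resulting Pick problem \cite[Lemma 6.19]{AM02}; the converse direction is dispatched by the same two citations. You bypass Sarason and Pick theory entirely and argue through the geometry of the model space: in the ``only if'' direction, squeezing the chain $1=\|P_\Theta(fu)\|\le\|fu\|\le\|f\|_{H^\infty}\le 1$ at a norming vector $u$ (which exists since $K_\Theta$ is finite-dimensional) simultaneously yields $fu\in K_\Theta$ and $\int_{\mathbb{T}}(1-|f|^2)|u|^2\,dm=0$; since $u$ is a nonzero rational function vanishing on at most a finite subset of $\mathbb{T}$, the latter forces $f$ inner, and the description $K_\Theta=\{q/D:\deg q\le n-1\}$ exhibits $f=b/a$ as a rational inner function of degree at most $n-1$. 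In the ``if'' direction, your dimension count via the two decompositions $K_{B\Theta}=K_\Theta\oplus\Theta K_B=K_B\oplus BK_\Theta$ produces a nonzero $u\in K_\Theta$ with $Bu\in K_\Theta$, in effect re-deriving the equivalence of \eqref{item:1} and \eqref{item:2} in Theorem~\ref{thm:vector} and constructing the extremal vectors explicitly, whereas the paper's proof of Theorem~\ref{thm:norm} never exhibits them. Each approach buys something: the paper's is shorter modulo the cited machinery and situates the result inside Sarason's commutant-lifting framework; yours is self-contained and, notably, more robust than the paper's ``simple proof'' --- the rational description of $K_\Theta$ remains valid with zeros counted with multiplicity, so your ``only if'' argument does not actually need the distinct-zeros hypothesis, while the paper's interpolation-uniqueness step genuinely does. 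Two small points you should make explicit: the identity $f(S_\Theta)=P_\Theta M_f|_{K_\Theta}$ for all $f\in H^\infty$ (the paper records it only for $\mathcal{A}(\mathbb{D})$; here it is immediate because $\sigma(S_\Theta)\subset\mathbb{D}$, so the Riesz--Dunford calculus applies and agrees with the compression formula), and the standard fact that a rational inner function is a finite Blaschke product whose degree equals the degree of the numerator of its reduced form, which is what converts $f=b/a$ with $\deg b\le n-1$ into $\deg f<n$.
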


Here we present a simple proof of this result when $\Theta$ has distinct zeros $z_1, \dots, z_n \in \mathbb{D}$.

\begin{proof} First, fix $f \in H^{\infty}_1(\mathbb{D})$. Then von Neumann's inequality implies that $\| f(S_{\Theta})\| \le 1.$

Now fix $f \in H^{\infty}_1(\mathbb{D})$ with $\| f(S_{\Theta})\| =1$.  Then by \cite[Proposition 5.1]{Sar67}, there is a unique $\psi \in H^\infty$ such that $\|\psi\|_\infty = \|f(S_\Theta)\|$ and $f(S_\Theta) = P_\Theta  T_\psi|_{K_\Theta}$,  where $T_\psi$ denotes multiplication by $\psi$. Moreover, $\psi$ is a finite Blaschke product of degree at most $n-1$.  Sarason's work \cite[pp. 187]{Sar67} implies that $\psi(z_j) = f(z_j)$ for $j = 1, \ldots, n$.  Since the $\deg \psi <n$,  the interpolation problem has a unique solution in $H^\infty_1(\mathbb{D})$ (see \cite[pp. 77, Lemma 6.19]{AM02}) and so, $f=\psi.$

Similarly, if we begin with a Blaschke product $f$ of $\deg f <n$, then \cite[Proposition 5.1]{Sar67} again implies that the existence of a unique $\psi$ with $\|\psi\|_\infty = \|f(S_\Theta)\| \le 1$. Again, the associated interpolation problem has a unique solution in $H^{\infty}_1(\mathbb{D})$ and so, we can conclude that $\psi =f $ and $\|f(S_\Theta)\| = \|\psi\|_\infty = 1,$ which completes the proof.  \end{proof}

Since $K_{\Theta}$ is finite dimensional, for $B$ a finite Blaschke product with $\deg B < \deg \Theta$, there is some nonzero vector $x \in K_{\Theta}$ such that $\| B(S_{\Theta}) x \| =\| x\|.$ Note that if $p$ is a polynomial, then $p(S_\Theta) = P_\Theta T_p |_{K_\Theta}$. This can then be extended to all functions in the disk algebra, $\mathcal{A}(\mathbb{D})$. The extremal vectors (which Sarason calls {\it maximal vectors} in \cite{Sar67}) can be characterized as follows.

\begin{theorem} \label{thm:vector} Let $\Theta$ be a finite Blaschke product with zeros $z_1, \dots, z_n$ and $B$ be a finite Blaschke product with zeros $a_1, \dots, a_J$. Assume $J <n.$  Then for each $x \in K_{\Theta}$, the following are equivalent:
\begin{enumerate}
\item\label{item:1} $\| B(S_\Theta) x \| = \| x\|$;
\item\label{item:2} $Bx \in K_{\Theta}$;
\item\label{item:3} $\displaystyle x(z) = \dfrac{p(z)\prod_{j=1}^J(1-\bar{a}_j z)}{\prod_{i=1}^n (1-\bar{\lam}_iz)}$ for some polynomial $p$ with $\deg p < n-J$;
\end{enumerate}

\end{theorem}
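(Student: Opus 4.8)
The plan is to prove the three equivalences by first establishing (\ref{item:1})$\Leftrightarrow$(\ref{item:2}), which is essentially free, and then (\ref{item:2})$\Leftrightarrow$(\ref{item:3}), where the real content is a divisibility-and-degree argument for rational functions. Since $B$ is a finite Blaschke product it lies in the disk algebra, so by the remark preceding the theorem $B(S_\Theta) = P_\Theta T_B|_{K_\Theta}$ and hence $B(S_\Theta)x = P_\Theta(Bx)$.

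For (\ref{item:1})$\Leftrightarrow$(\ref{item:2}), I would use that $B$ is inner, so $T_B$ is an isometry on $H^2$ and $\|Bx\| = \|x\|$ for every $x \in K_\Theta$. Since $K_\Theta$ is the range of the orthogonal projection $P_\Theta$, the Pythagorean identity gives
\[
\|x\|^2 = \|Bx\|^2 = \|P_\Theta(Bx)\|^2 + \|(I-P_\Theta)(Bx)\|^2 = \|B(S_\Theta)x\|^2 + \|(I-P_\Theta)(Bx)\|^2 .
\]
Thus $\|B(S_\Theta)x\| = \|x\|$ holds if and only if $(I-P_\Theta)(Bx) = 0$, that is, if and only if $Bx \in K_\Theta$. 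This settles (\ref{item:1})$\Leftrightarrow$(\ref{item:2}) with no computation.

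For (\ref{item:2})$\Leftrightarrow$(\ref{item:3}) I would first record the explicit description of the model space: since $\Theta$ has distinct zeros $z_1,\dots,z_n$, the space $K_\Theta$ is spanned by the reproducing kernels $k_{z_i}(z) = 1/(1-\bar{z}_i z)$, and a partial-fractions argument identifies
\[
K_\Theta = \Big\{ \tfrac{q(z)}{\prod_{i=1}^n(1-\bar{z}_i z)} : q \in \mathbb{C}[z],\ \deg q \le n-1 \Big\}.
\]
Writing $B(z) = c\prod_{j=1}^J \tfrac{z-a_j}{1-\bar{a}_j z}$ and $x = q/\prod_i(1-\bar{z}_i z)$ with $\deg q \le n-1$, one computes
\[
Bx = \frac{c\,\prod_{j=1}^J(z-a_j)\,q(z)}{\prod_{j=1}^J(1-\bar{a}_j z)\,\prod_{i=1}^n(1-\bar{z}_i z)} .
\]
The implication (\ref{item:3})$\Rightarrow$(\ref{item:2}) is then immediate: substituting $q = p\prod_{j}(1-\bar{a}_j z)$ cancels the factor $\prod_{j}(1-\bar{a}_j z)$ and leaves $Bx = c\,\prod_j(z-a_j)\,p/\prod_i(1-\bar{z}_i z)$, whose numerator has degree $J + \deg p < n$; the same count shows $x \in K_\Theta$.

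The main work is (\ref{item:2})$\Rightarrow$(\ref{item:3}). Requiring $Bx \in K_\Theta$ forces the displayed rational function to reduce to a polynomial of degree at most $n-1$ over $\prod_i(1-\bar{z}_i z)$; in particular $\prod_j(1-\bar{a}_j z)$ must divide $c\prod_j(z-a_j)q$. The key observation is that $\prod_j(1-\bar{a}_j z)$ and $\prod_j(z-a_j)$ are coprime, since the roots $1/\bar{a}_j$ of the former lie outside $\overline{\mathbb{D}}$ while the roots $a_j$ of the latter lie in $\mathbb{D}$; hence $\prod_j(1-\bar{a}_j z) \mid q$, say $q = p\prod_j(1-\bar{a}_j z)$, which produces the numerator form in (\ref{item:3}). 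Finally, tracking the degree of the reduced numerator $\tilde{q} = c\prod_j(z-a_j)\,p$ of $Bx$ against the bound $\deg \tilde q \le n-1$ pins down $\deg p \le n-1-J$, i.e. $\deg p < n-J$. I expect this coprimality-plus-degree bookkeeping to be the only delicate point, with the sole subtlety being to treat factors having $a_j = 0$ (where $1-\bar{a}_j z$ is constant) carefully; noting that $\prod_j(z-a_j)$ always has degree exactly $J$ keeps the final degree count correct regardless of such factors.
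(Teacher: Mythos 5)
Your proposal is correct and follows essentially the same route as the paper: the equivalence \eqref{item:1}$\Leftrightarrow$\eqref{item:2} via $B(S_\Theta)x = P_\Theta(Bx)$, the isometry of multiplication by the inner function $B$, and the Pythagorean identity is exactly the paper's argument, and your \eqref{item:2}$\Leftrightarrow$\eqref{item:3} uses the same rational-function description of $K_\Theta$. The only difference is that you spell out the coprimality-and-degree bookkeeping (including the $a_j=0$ case) that the paper's one-line assertion ``$Bx\in K_\Theta$ if and only if $q = p\prod_j(1-\bar{a}_jz)$ with $\deg p < n-J$'' leaves implicit, which is a fair expansion rather than a different method.
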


\begin{proof} For each $f\in \mathcal{A}(\mathbb{D})$, we know that $f(S_{\Theta}) = P_{\Theta} T_f |_{K_{\Theta}}$, where $T_f$ denotes multiplication by $f$. Applying this to the Blaschke product $B$ and $x \in K_{\Theta}$ gives
 $\| B(S_{\Theta}) x \| = \| P_{\Theta} (Bx) \|$. Observe that
\[ \| x\|^2_{K_{\Theta}} = \| Bx \|_{H^2}^2 = \| P_{\Theta}(Bx) \|_{H^2}^2 + \| (I - P_{\Theta}) (Bx) \|_{H^2}^2.\]
Thus, $\| B(S_{\Theta}) x \|  = \| x\|$ if and only if $(I - P_{\Theta}) (Bx)  =0$, or equivalently $Bx \in K_{\Theta}$.  Therefore \eqref{item:1} holds if and only if \eqref{item:2} holds.

Each  $ x\in K_{\Theta}$ is exactly of the form $\frac{q(z)}{\prod_{i=1}^n (1-\bar{\lam}_iz)}$ for some polynomial $q$ with $\deg q <n$. Thus, for $x \in K_{\Theta}$, the function $Bx \in K_{\Theta}$ if and only if $q(z) =  p(z)\prod_{j=1}^J(1-\bar{a}_j z)$ for some polynomial $p$ with $\deg p < n-J.$  Consequently, \eqref{item:2} holds if and only if \eqref{item:3} holds.
\end{proof}

Note that extremal vectors can be used to build new bases of $K_{\Theta}$.

\begin{remark} Let $\Theta$ and $B$ be finite Blaschke products with $\deg \Theta =n$ and $\deg B = n-1$. Factor $B = B_1 \cdots B_{n-1}$ into its component Blaschke factors. Let $ x\in K_{\Theta}$ be an extremal vector of $S_{\Theta}$ associated to the extremal function $B.$ Then $Bx \in K_{\Theta}$. Since $K_{\Theta} = H^2 \cap \overline{ \Theta z H^2}$, writing a factorization of $B = C C^\prime$ and multiplying by $\overline{C^\prime}$ shows that $B_1 x, B_1B_2x, \dots, B x \in K_{\Theta}$. Then linear independence  implies that the set $\{ B_1x, B_1 B_2 x, \dots, B x \}$ is a basis of $K_{\Theta}$. \end{remark}

In contrast to these norm results, obtaining a general characterization for $w$-extremal functions or vectors in this setting seems quite difficult, prompting the question:

\begin{question} What are the $w$-extremal functions and vectors for  $(S_{\Theta}, \mathbb{D})$? \end{question}

Recent work by Gaaya and Gorkin-Partington has revealed precise formulas for some $w(S_{\Theta})$, see \cite{GA13, GP19}. This suggests that it might be possible to answer parts of this question for very specialized $\Theta.$

\subsection{General $n \times n$ matrix $A$ with $\Omega = W(A)^\circ$} \label{sec:degree}

As mentioned earlier, an extremal (or $w$-extremal) function $f$ for $(A,\Omega)$ has the form $f=B_A\circ\phi$,
where $\phi$ is a bijective conformal map of $\Omega$ onto $\mathbb{D}$ and $B_A$ is a Blaschke product of degree at most $n-1$. In this section we study the basic structure of such extremal functions. In general this is a very challenging question, but we do make partial progress on the following question:

\begin{question} Given $A$ with $\sigma(A) \subseteq W(A)^\circ$ and $\Omega=W(A)^\circ$, what is the maximum degree of a Blaschke product $B_A$ corresponding to an extremal function $f$? \end{question}

We say that $f$ is of \emph{maximal degree} if $\deg B_A=n-1$.  
Some numerical computations suggest that extremal functions $f$ for randomly
generated matrices $A$ often have less than maximal degree.  For example, Figure \ref{fig:extremalfs}
shows the degrees of identified extremal functions $f$ for $500$ random dense complex matrices
of size $3 \times 3$, $4 \times 4$, and $5 \times 5$.  The extremal functions were computed
using a conformal mapping routine in the {\em chebfun} package, see https://www.chebfun.org/, to produce the
mapping $\phi$ from $W(A)$ to $\mathbb{D}$ and then an optimization code to find the roots $\alpha_j$,
$j=1, \ldots , n-1$ of a Blaschke product $B$ that maximized $\| B( \phi (A) ) \|$.  The $\alpha_j$s
were constrained to have magnitude at most $1$, and if the code returned some $\alpha_j$s with
magnitude extremely close to $1$, then we determined that the actual degree of $B$ was less than
maximal, since if $| \alpha | = 1$ then the Blaschke factor $(z - \alpha )/(1 - \bar{\alpha} z )$
is just a scalar of modulus 1.  There is no guarantee that the code has found the true extremal function $f$,
but we tested several of the known properties of extremal functions (such as the orthogonality condition
$\langle f(A)x,x \rangle = 0$), and these all held to a close approximation in the results that we
recorded.  Also, one cannot use numerical results to {\em definitively} determine the degree of $f$;
it could be that a Blaschke product has maximal degree but has some roots with magnitude
extremely close to $1$; if a numerically computed $\alpha_j$ had magnitude greater than $0.9999$,
we concluded that $| \alpha_j |$ was actually $1$, so the Blaschke factor did not
add to the degree.  Note, from Figure \ref{fig:extremalfs}, that for $3 \times 3$ random matrices,
most extremal $f$'s had maximal degree $2$; for $4 \times 4$ random matrices the extremal $f$ was less likely
to have the maximal degree $3$; and for $5 \times 5$ random matrices, only one of the $500$ examples tested
had an extremal function $f$ with maximal degree $4$.

\begin{figure}[ht]
\centerline{\epsfig{file=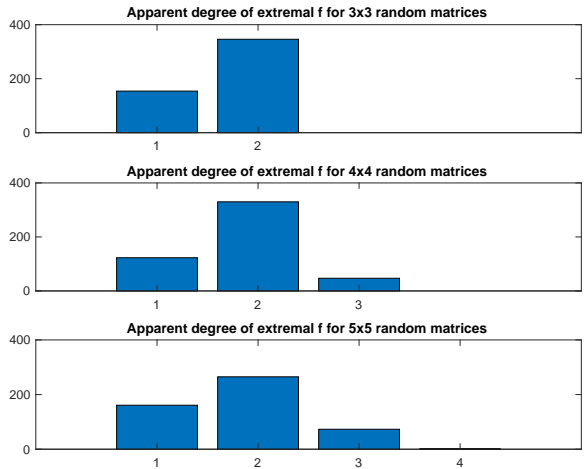,width=3in}}
\caption{Apparent degrees of extremal functions for random dense complex matrices of size $3 \times 3$,
$4 \times 4$, and $5 \times 5$.}
\label{fig:extremalfs}
\end{figure}

\begin{remark}
In a few special cases, it has been proved that the extremal function $f$ has
less than maximal degree. For instance, Li \cite{KenanLi} has shown that matrices of the form
\[
\left[ \begin{array}{ccc} 0 & 1 & 0 \\ 0 & 0 & 1-t \\ 0 & 0 & 0 \end{array} \right] ,~~ t \in ( 1 -
1/ \sqrt{3} , \sqrt{3} - 1 ] ,
\]
have $B_1 (z) = z$ as the unique extremal Blaschke product, while for $t \in [0, 1 - 1/ \sqrt{3} )$
the unique extremal Blaschke product is $B_2 (z) = z^2$.  At the point $t = 1 - 1/ \sqrt{3}$, both
Blaschke products give the same value for $\| B \circ \phi (A) \|$, where $\phi (z) = z/r$,
$r = \sqrt{1 + (1-t )^2}/2$ is the conformal mapping from $W(A)$ to $\mathbb{D}$.
\end{remark}

\begin{remark}
There are also some cases where the extremal Blaschke product is
known to be of maximal degree.
In \cite[Theorem 2.5]{Cr04}, Crouzeix considered the setting of $2 \times2$ matrices. For a fixed $2\times 2$ matrix $A$, he showed that the constant in \eqref{eqn:cc} is some number $\psi(A)$ so that $\psi(A)>1$ when $A$ is non-normal. This shows that as long as $A$ is not normal, $B_A$ cannot be a constant. Thus, it has to have degree $1$ and so, for $2\times 2$ matrices, any extremal $f$ of the form $B_A \circ \phi$ must have maximal degree.
\end{remark}

We now show that, for general $n$, there is an open set of $n\times n$ matrices whose extremal functions have maximal degree.

\begin{theorem}\label{T:openset}
For each $n\ge2$, there exists a non-empty open set $U$ of $n\times n$ matrices such that, for each $A\in U$, we have $\sigma(A)\subset W(A)^\circ$
and all extremal functions for $(A,W(A)^\circ)$ are of maximal degree.
\end{theorem}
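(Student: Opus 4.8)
The plan is to reduce the statement to two pieces: (1) the existence, for each $n$, of a single matrix $A_0$ with $\sigma(A_0)\subset W(A_0)^\circ$ whose extremal functions all have maximal degree, and (2) the stability of this property under small perturbations of $A_0$. Piece (1) is supplied by the example in \eqref{E:Crabb} (and, for $n=2$, by the preceding remark, which shows that every non-normal matrix qualifies). The real content is piece (2), and the key idea is to express ``all extremal functions have maximal degree'' as a \emph{strict inequality} between two quantities that vary continuously with $A$.

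To set this up, recall from Crouzeix's characterization that every extremal function for $(A,W(A)^\circ)$ is $B\circ\phi_A$, where $\phi_A$ conformally maps $\Omega_A:=W(A)^\circ$ onto $\mathbb{D}$ and $\deg B\le n-1$. Parametrizing $B$ by its zeros, write
\[
B_\alpha(z)=\prod_{j=1}^{n-1}\frac{z-\alpha_j}{1-\overline{\alpha_j}\,z},\qquad \alpha\in\overline{\mathbb{D}}^{\,n-1},
\]
and recall that a factor with $|\alpha_j|=1$ reduces to a unimodular constant, so $\deg B_\alpha$ is the number of coordinates with $|\alpha_j|<1$. Since unimodular constants leave $\|f(A)\|$ unchanged, every extremal function is $B_\alpha\circ\phi_A$ for a maximizer $\alpha$ of $G(A,\alpha):=\|(B_\alpha\circ\phi_A)(A)\|$ over the closed polydisk, and maximal degree corresponds precisely to $\alpha$ lying in the open polydisk. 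Now set $E(A):=\max_{\alpha\in\overline{\mathbb{D}}^{\,n-1}}G(A,\alpha)$ and $E_{\deg}(A):=\max_{\alpha\in\partial(\overline{\mathbb{D}}^{\,n-1})}G(A,\alpha)$, the best value attainable by a non-maximal-degree product; both maxima exist by compactness. A short argument then shows that all extremal functions have maximal degree if and only if $E_{\deg}(A)<E(A)$: a boundary maximizer would be a non-maximal-degree extremal function, while a strict gap pushes every maximizer into the open polydisk. In particular $E_{\deg}(A_0)<E(A_0)$ for the seed $A_0$ (for $n=2$, $E_{\deg}(A_0)=1<\psi(A_0)=E(A_0)$).

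Finally I would prove that $A\mapsto E(A)$ and $A\mapsto E_{\deg}(A)$ are continuous near $A_0$; then $\{A:E_{\deg}(A)<E(A)\}$ is open, and intersecting it with the open set where $\sigma(A)\subset W(A)^\circ$ produces the required neighborhood $U$. This reduces to the joint continuity of $G$, which follows from: (a) $A\mapsto W(A)$ is Hausdorff-continuous with convex values, so the convex domains $\Omega_A$ converge to $\Omega_{A_0}$ in the Carath\'eodory kernel sense and, after normalizing $\phi_A$ at a fixed point $p\in\Omega_{A_0}$, the maps $\phi_A$ converge locally uniformly on $\Omega_{A_0}$; and (b) for $A$ near $A_0$ the spectrum stays inside a fixed contour $\Gamma\subset\Omega_{A_0}$, so $(B_\alpha\circ\phi_A)(A)=\frac{1}{2\pi i}\oint_\Gamma(B_\alpha\circ\phi_A)(\zeta)(\zeta-A)^{-1}\,d\zeta$ is continuous in $(A,\alpha)$ because its integrand is. The main obstacle is step (a): controlling the conformal maps $\phi_A$ as the convex set $W(A)^\circ$ deforms, ensuring that $W(A)^\circ$ remains a genuine two-dimensional simply connected domain on a full neighborhood of $A_0$ and that $\phi_A$ converges locally uniformly near $\sigma(A_0)$.
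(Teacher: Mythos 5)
Your proposal is correct in outline, and it shares the paper's seed: the matrix $C$ of \eqref{E:Crabb} together with the rigidity statement of Theorem~\ref{T:Crabb}. But your stability step takes a genuinely different route. You parametrize Blaschke products of degree at most $n-1$ by their zeros $\alpha\in\overline{\mathbb{D}}^{\,n-1}$, encode ``all extremal functions have maximal degree'' as the strict gap $E_{\deg}(A)<E(A)$ between the boundary maximum and the global maximum of $G(A,\alpha)$, and reduce openness to joint continuity of $G$ — which commits you to controlling the conformal maps $\phi_A$ as $W(A)^\circ$ deforms (Carath\'eodory kernel convergence) plus a Berge-type maximum argument. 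The paper instead argues by contradiction with soft compactness: given counterexamples $A_k\to C$, it rescales $A_k\mapsto r_kA_k$ so that $W(A_k)\subset\mathbb{D}$, which makes every $g\in H_1^\infty(\mathbb{D})$ a competitor on $W(A_k)^\circ$ and eliminates conformal-map continuity entirely (exploiting $W(C)=\overline{\mathbb{D}}$); it then extracts a locally uniform limit $f$ of the extremal functions $f_k$ by a normal-families argument, shows $f$ is extremal for $(C,\mathbb{D})$, hence $f(z)=\gamma z^{n-1}$ by Theorem~\ref{T:Crabb}, and invokes Hurwitz's theorem: $f_k$ must eventually have at least $n-1$ zeros, contradicting non-maximal degree, since a degree-$d$ Blaschke product composed with a conformal bijection has exactly $d$ zeros. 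Hurwitz thus plays the role of your interior-versus-boundary maximizer dichotomy, with no parametrization needed. The kernel-convergence step you flag as the main obstacle is real work but does go through: near $C$ the sets $W(A)$ are convex, contain a fixed disk, lie in a fixed ball, and vary Hausdorff-continuously in $A$, and in fact you only need continuity of $E$ and $E_{\deg}$ at the single point $A_0=C$ (where $\Omega_C=\mathbb{D}$), not on a whole neighborhood, to extract the ball $U$ from the strict gap at $A_0$. What your heavier route buys is a quantitative, reusable openness criterion — $\{A: E_{\deg}(A)<E(A)\}$ is open around any seed matrix with the strict-gap property — whereas the paper's softer argument is shorter and tailored to the Crabb matrix, whose numerical range being the full closed disk is what makes the rescaling trick available.
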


We first consider an example, which may be well known. This will then be used in the proof of Theorem~\ref{T:openset}.

\begin{theorem}\label{T:Crabb}
Let $n\ge 2$, and let $C$ be the $n\times n$ matrix

\begin{equation}\label{E:Crabb}
C:=\begin{pmatrix}
0 &0 &\dots &0 &0 &0\\
\sqrt2 &0 &\dots &0 &0 &0\\
0 &1 &\dots &0 &0 &0\\
\vdots &\vdots&\ddots &\vdots  &\vdots &\vdots\\
0 &0  &\hdots &1 &0 &0\\
0 &0  &\hdots &0 &\sqrt2 &0
\end{pmatrix}.
\end{equation}
Then $\sigma(C)=\{0\}$ and $W(C)=\overline{\mathbb{D}}$.
The only extremal functions for $(C,\mathbb{D})$ are of the form $f(z)=\gamma z^{n-1}$, where $\gamma\in\mathbb{T}$.
\end{theorem}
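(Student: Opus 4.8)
The plan is to treat the three assertions in turn, exploiting that $C$ is a weighted shift: with respect to the standard basis $e_1,\dots,e_n$ of $\mathbb{C}^n$ one has $Ce_j=s_j e_{j+1}$ for $1\le j\le n-1$ and $Ce_n=0$, where the two outermost weights satisfy $s_1=s_{n-1}=\sqrt2$ and $s_j=1$ otherwise, so that $\prod_{j=1}^{n-1}s_j=2$. Since $C$ is strictly lower triangular it is nilpotent, so $\sigma(C)=\{0\}$ is immediate. For $W(C)=\overline{\mathbb{D}}$ I would first record that $W(C)$ is a disk centered at $0$: conjugating by the diagonal unitary $U_\theta=\mathrm{diag}(1,e^{i\theta},\dots,e^{i(n-1)\theta})$ gives $U_\theta C U_\theta^{*}=e^{i\theta}C$, so $W(C)$ is rotation invariant and hence, being convex and compact, a centered disk of radius $w(C)$. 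Because the disk is centered at $0$, its radius equals $\max_{\|x\|=1}\mathrm{Re}\langle Cx,x\rangle$, which is the largest eigenvalue of $\mathrm{Re}\,C=\tfrac12(C+C^{*})$, a nonnegative, irreducible, symmetric tridiagonal matrix. I would then exhibit the strictly positive vector $v=(1,\sqrt2,\dots,\sqrt2,1)^{T}$ (entries $1$ in the first and last slots, $\sqrt2$ in between) and verify directly that $(\mathrm{Re}\,C)v=v$; by Perron--Frobenius the eigenvalue attached to a positive eigenvector is the spectral radius, so $w(C)=1$ and $W(C)=\overline{\mathbb{D}}$.

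The heart of the extremal statement is the bound $\|f(C)\|\le 2$ for every $f\in H^\infty_1(\mathbb{D})$, which I would obtain by a diagonal similarity to the \emph{unweighted} shift. Let $N$ be the nilpotent shift $Ne_j=e_{j+1}$, so that $\|N\|=1$, and set $D=\mathrm{diag}(d_1,\dots,d_n)$ with $d_j:=\prod_{\ell=j}^{n-1}s_\ell$; then $d_n=1$, $d_1=2$, and indeed $D=\mathrm{diag}(2,\sqrt2,\dots,\sqrt2,1)$. A one-line comparison of subdiagonal entries gives $C=D^{-1}ND$, whence $f(C)=D^{-1}f(N)D$. Since $N$ is a contraction, von Neumann's inequality yields
\[
\|f(C)\|=\|D^{-1}f(N)D\|\le \|D^{-1}\|\,\|f(N)\|\,\|D\|=1\cdot\|f(N)\|\cdot 2\le 2\|f\|_\infty\le 2 .
\]
Taking $f(z)=z^{n-1}$ gives $f(C)=C^{n-1}=D^{-1}(e_n e_1^{*})D=2\,e_n e_1^{*}$, of norm $2$; hence $\sup_{g\in H^\infty_1(\mathbb{D})}\|g(C)\|=2$ and $z^{n-1}$ is extremal.

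It then remains to show that the extremal $f$ are \emph{only} the $\gamma z^{n-1}$ with $\gamma\in\mathbb{T}$. Suppose $\|f(C)\|=2$ and choose a unit vector $x$ with $\|f(C)x\|=2$. Running the equalities backward through the estimate above forces, in order: $\|Dx\|=\|D\|\,\|x\|=2$, so $x=e_1$ up to a unimodular factor (because $d_1=2$ is the \emph{strict} maximum of $D$); next $\|f(N)Dx\|=\|f(N)\|\,\|Dx\|$ with $\|f(N)\|=1$, so $f(N)$ attains its norm at $e_1$; and finally $\|D^{-1}f(N)Dx\|=\|D^{-1}\|\,\|f(N)Dx\|$, so $f(N)Dx$ points in the direction of $e_n$ (the strict maximizer of $D^{-1}$). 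Writing $\hat f(k)$ for the $k$-th Taylor coefficient of $f$ at $0$, we have $f(N)e_1=(\hat f(0),\hat f(1),\dots,\hat f(n-1))^{T}$, so proportionality to $e_n$ forces $\hat f(0)=\dots=\hat f(n-2)=0$ and $|\hat f(n-1)|=\|f(N)e_1\|=1$. The classical coefficient estimate $|\hat f(n-1)|\le\|f\|_\infty$, together with its equality case applied to $\hat f(n-1)=\tfrac1{2\pi}\int_{\mathbb{T}}f(e^{i\theta})e^{-i(n-1)\theta}\,d\theta$, then shows that $|\hat f(n-1)|=1=\|f\|_\infty$ holds only for $f=\gamma z^{n-1}$ with $\gamma\in\mathbb{T}$.

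The main obstacle is precisely this last rigidity argument: extracting, from the single scalar equality $\|f(C)x\|=2$, enough information to pin down simultaneously the extremal vector and the function. Once the similarity $C=D^{-1}ND$ is established, the strictness of the extreme directions of $D$ and $D^{-1}$ makes each step in the equality analysis forced, so I expect the chief subtleties to be (i) the clean verification that $(\mathrm{Re}\,C)v=v$ for all $n$, needed for $w(C)=1$, and (ii) the careful bookkeeping of the three equality conditions that together deliver $f=\gamma z^{n-1}$.
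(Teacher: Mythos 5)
Your proposal is correct and follows essentially the same route as the paper: your $D=\mathrm{diag}(2,\sqrt2,\dots,\sqrt2,1)$ is just $\sqrt2$ times the paper's $\mathrm{diag}(\sqrt2,1,\dots,1,1/\sqrt2)$, so the similarity $C=D^{-1}ND$ (the paper's $J$ is your $N$), the von Neumann bound $\|f(C)\|\le 2$, the equality chain forcing $x\propto e_1$ and $f(N)e_1\propto e_n$, and the final coefficient rigidity (you via the equality case of $|\hat f(n-1)|\le\|f\|_\infty$, the paper via Parseval) constitute the identical argument. The only substantive addition is that you prove $W(C)=\overline{\mathbb{D}}$ directly --- rotation invariance via the diagonal unitary $U_\theta$ together with the Perron eigenvector $(1,\sqrt2,\dots,\sqrt2,1)^{T}$ of $\mathrm{Re}\,C$ --- where the paper simply cites \cite{Ch13}, a pleasant self-contained bonus (just note that for $n=2$ the single subdiagonal weight must be read as $2$, as your product formula $\prod_j s_j=2$ and the paper's $D^{-1}JD$ both implicitly require).
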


\begin{proof}
It is obvious that $\sigma(C)=\{0\}$. Also, it is well known that $W(C)=\overline{\mathbb{D}}$, see \cite{Ch13}.
Let $f$ be an extremal function for $(C,\mathbb{D})$.
We have $C:=D^{-1}JD$, where $D,J$ are the $n\times n$  diagonal and Jordan matrices given respectively by
\[ D:=\begin{pmatrix}
\sqrt2 &0 &\dots &0 &0\\
0 &1 &\dots &0 &0\\
\vdots &\vdots&\ddots &\vdots  &\vdots\\
0 &0  &\hdots &1 &0\\
0 &0 &\hdots &0 &\frac{1}{\sqrt2}
\end{pmatrix}
\quad\text{and}\quad
J:=\begin{pmatrix}
0 &0 &\dots &0 &0\\
1 &0 &\dots &0 &0\\
0 &1 &\dots &0 &0\\
\vdots &\vdots&\ddots &\vdots  &\vdots\\
0 &0  &\hdots &1 &0
\end{pmatrix}.\]
Hence, writing $e_1,\dots,e_n$ for the standard unit vector basis of $\mathbb{C}^n$,
we have
\[
C^{n-1}e_1=D^{-1}J^{n-1}De_1=D^{-1}J^{n-1}\sqrt{2}e_1=2D^{-1}e_n=2e_n,\]
so $\|C^{n-1}\|\ge2$.
Let $g(z) = z^{n-1}$. As  $f$ is extremal, it follows that
\[
\|f(C)\|\ge \|g(C)\| \ge 2.
\]
On the other hand, since $J$ is a contraction, von Neumann's inequality implies that $\|f(J)\|\le1$,
and as $\|D\|=\|D^{-1}\|=\sqrt2$, we get
\[
\|f(C)\|=\|D^{-1}f(J)D\|\le \|D^{-1}\|\|f(J)\|\|D\|\le 2.
\]
Thus, if $x$ is a unit vector of $\mathbb{C}^n$ on which $f(C)$ attains its norm, then we must have
\[
\|Dx\|=\sqrt{2}, \quad \|f(J)Dx\|=\sqrt2, \quad\text{and}\quad \|D^{-1}f(J)Dx\|=2.
\]
The first of these equalities implies that $x$ is a multiple of $e_1$,
and the second and third together imply that $f(J)Dx$ is a multiple of $e_n$.
It follows that  $f(J)e_1=\gamma e_n$, where necessarily $|\gamma|=1$.

Now let $f(z)=\sum_{k\ge0}\beta_kz^k$ be the Taylor expansion of $f$ around zero.
Then $f(J)e_1=\sum_{k=0}^{n-1}\beta_ke_{k+1}$. In particular, we must have
$\beta_{n-1}=\gamma$. But also, by Parseval's theorem, $\sum_{k\ge0}|\beta_k|^2=\|f\|_{L^2(\mathbb{T})}^2\le1$.
Therefore $\beta_k=0$ for all $k\ne n-1$. Thus $f(z)=\gamma z^{n-1}$.
\end{proof}

\begin{remark} The matrix $C$ that we consider here appears in other work, including that of Crabb \cite{Crabb}, Choi \cite{Ch13}, and Greenbaum and Overton \cite{GO}.\end{remark}

We now return to Theorem~\ref{T:openset}.

\begin{proof}[Proof of Theorem~\ref{T:openset}]
Let $C$ be the $n\times n$ matrix defined in the previous theorem. We claim that there is an open neighborhood $U$ of $C$ in $M_n(\mathbb{C})$ such that, for each $A\in U$, we have $\sigma(A)\subset W(A)^\circ$ and every extremal $f$ for $(A,W(A)^\circ)$ is of maximal degree. We shall prove this by contradiction.

First of all, it is easy to see that there exists $\delta>0$ such that
\[
\|A-C\|<\delta\quad\Rightarrow \quad\sigma(A)\subset\{|z|<1/2\}\subset W(A)^\circ.
\]
Thus, if the claim is false, then there exists a sequence $(A_k)$ of $n\times n$ matrices converging to $C$
such that, for each $k$, there is an $f_k$ extremal for $(A_k,W(A_k)^\circ)$ which is not of maximal degree.
Replacing $A_k$ by $r_kA_k$, where $(r_k)$ is a suitable positive sequence tending to $1$,
we may further suppose that $W(A_k)\subset \mathbb{D}$ for all $k$. Replacing $(A_k)$ by a subsequence, if necessary, we can suppose that $f_k\to f$ locally uniformly on $\mathbb{D}$,
where  $f$ is bounded by $1$ on $\mathbb{D}$.
Given a holomorphic function $g:\mathbb{D}\to\overline{\mathbb{D}}$, extremality of $f_k$ for $(A_k,W(A_k)^\circ)$
implies that $\|g(A_k)\|\le\|f_k(A_k)\|$.
Since the spectra of $A_k$ and $C$ remain inside a fixed compact subset of $\mathbb{D}$,
we also have $g(A_k)\to g(C)$ and $f_k(A_k)\to f(C)$ as $k\to\infty$. It follows that $\|g(C)\|\le \|f(C)\|$.
Therefore $f$ is extremal for $(C,\mathbb{D})$. By Theorem~\ref{T:Crabb}, it must be of the form $f(z)=\gamma z^{n-1}$ for some $\gamma\in\mathbb{T}$. In particular, it has a zero of order $n-1$ at the origin. By Hurwitz's theorem, for all sufficiently large $k$, the function $f_k$ must have at least $n-1$ zeros. This contradicts the fact $f_k$ is not of maximal degree.
\end{proof}

As a $w$-extremal function is also of the form $\tilde{B}_A \circ \phi$ for some Blaschke product $\tilde{B}_A$ with degree strictly less than $n$, it makes sense to ask:

\begin{question} For an $n\times n$ matrix $A$ with $\sigma(A) \subset W(A)^\circ$ and $\Omega =W(A)^\circ$, what is the relationship between the extremal $B_A$ and the $w$-extremal $\tilde{B}_A$?
\end{question}
 This is an interesting question, because there is a close connection between the extremal and $w$-extremal problems.
It was shown in \cite{BCK18} that  inequality \eqref{eqn:cc}  holds
if and only if
\begin{equation}\label{eqn:w}
w(p(A)) \le \widetilde{C} \sup_{z \in W(A)} | p(z) | \quad \text{ for all } p \in \mathbb{C}[z],
\end{equation}
where $\widetilde{C}:=(C+C^{-1})/2$. In particular, \eqref{eqn:cc} holds with $C=2$ if and only if
\eqref{eqn:w} holds with $\widetilde{C}=5/4$.
\section{Structure of Extremal Functions and Vectors}\label{sec:EFV}

As in Section \ref{sec:extremal}, we let $\Omega$ be a bounded simply connected domain with smooth boundary containing the spectrum of an $n \times n$ matrix $A$. We study the structure of both (1) extremal functions and their associated extremal vectors and (2) $w$-extremal functions and their associated $w$-extremal vectors for $(A, \Omega).$ As noted in Question \ref{q:cc}, the hope is that these structural results will aid in characterizing the extremal functions (and vectors) that have played a vital role in recent investigations of the Crouzeix conjecture.

\subsection{Orthogonality Properties for Extremal Functions} \label{sec:orthog}
First recall that extremal functions  enjoy the following orthogonality property \cite[Theorem~5.1]{CGL18}:
if $f$ is  extremal  for $(A,\Omega)$ with  $\|f(A)\|>1$ and $x$ is an associated extremal vector, then
 $\langle f(A)x,x\rangle=0$. The authors of \cite{CGL18} used this result to give a new proof of the theorem of Okubo and Ando that, if  $W(A)^\circ \subset \mathbb{D}$  and $f$ is holomorphic on~$\mathbb{D}$, then $\|f(A)\|\le 2\sup_{z\in \mathbb{D}}|f(z)|$. We generalize the orthogonality property as follows.

\begin{theorem}\label{T:orthog}
Let $f$ be extremal  for $(A,\Omega)$, and
let $x$ be an associated extremal unit vector.
Let $f=f_1f_2$ be a factorization of $f$ as a product of functions $f_1, f_2\in H_1^\infty(\Omega)$.
Then
\begin{equation}\label{E:orthog}
\Bigl\langle f_1(A)x,\Bigl(\|f(A)\|^2I-f_2(A)^*f_2(A)\Bigr)x\Bigr\rangle=0.
\end{equation}
\end{theorem}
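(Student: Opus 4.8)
The plan is to read off \eqref{E:orthog} as a first-order optimality condition, perturbing $f$ through the factor $f_1$ while holding $f_2$ fixed. The conceptual starting point is that an extremal unit vector is a top singular vector: writing $N:=\|f(A)\|$, the hypotheses $\|x\|=1$ and $\|f(A)x\|=N$ say that $x$ maximizes $\|f(A)y\|$ over unit vectors $y$, and hence $f(A)^{*}f(A)x=N^{2}x$. This eigenvector relation is exactly the ingredient that converts the optimality condition into the stated identity; as a sanity check, in the special case $f_2\equiv1$ it reproduces the known orthogonality $\langle f(A)x,x\rangle=0$ when $N>1$.

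For the perturbation, I would fix a small complex parameter $s$ and set
\[
g_s:=\frac{f_1+s}{1+\overline{s}\,f_1}\,f_2 .
\]
Since $\|f_1\|_\infty\le1$, the M\"obius factor has sup-norm at most $1$ for $|s|<1$, and because $\|f_2\|_\infty\le1$ as well, we get $g_s\in H_1^\infty(\Omega)$ with $g_0=f$. Extremality of $f$ then forces $\|g_s(A)\|\le N$, so $\|g_s(A)x\|\le\|g_s(A)\|\le N$ for every $s$, while $\|g_0(A)x\|=N$. Thus the real-valued smooth function $s\mapsto\|g_s(A)x\|^2$ attains its maximum at the interior point $s=0$, and its first-order part in $s$ must vanish. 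Working with the fixed vector $x$ (rather than tracking how extremal vectors vary with $s$) is what makes this clean.

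To extract the condition, I would expand $g_s=f+s f_2-\overline{s}\,f_1^2 f_2+O(|s|^2)$ by differentiating the M\"obius factor in $s$ and $\overline{s}$, then apply the holomorphic functional calculus: since $\sigma(A)\subset\Omega$, the map $\Phi\mapsto\Phi(A)x$ is bounded and linear (via the Riesz--Dunford integral), so it commutes with the $s$-derivative and yields $g_s(A)x=f(A)x+s f_2(A)x-\overline{s}\,f_1(A)^2f_2(A)x+O(|s|^2)$. Expanding the squared norm gives $\|g_s(A)x\|^2=N^2+2\,\mathrm{Re}\!\bigl(s\lambda\bigr)+O(|s|^2)$ with $\lambda=\langle f_2(A)x,f(A)x\rangle-\langle f(A)x,f_1(A)^2f_2(A)x\rangle$, and vanishing of $\lambda$ gives
\[
\langle f_2(A)x,\,f(A)x\rangle=\langle f(A)x,\,f_1(A)^2f_2(A)x\rangle .
\]
I would then simplify the right-hand side using $f_1^2 f_2=f_1 f$ and commutativity of the functional calculus to pull out a factor $f(A)$, and invoke $f(A)^{*}f(A)x=N^{2}x$ to rewrite it as $N^{2}\langle x,f_1(A)x\rangle$. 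Taking complex conjugates turns the equality into $N^{2}\langle f_1(A)x,x\rangle=\langle f(A)x,f_2(A)x\rangle$; rewriting $\langle f(A)x,f_2(A)x\rangle=\langle f_1(A)x,f_2(A)^{*}f_2(A)x\rangle$ via $f(A)=f_2(A)f_1(A)$ yields precisely \eqref{E:orthog}.

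I expect the main obstacle to be choosing the right perturbation rather than the computation itself. Perturbing $f_2$ instead, or multiplying $f$ by a M\"obius factor of $f$, produces a valid but different identity that degenerates into no information when one factor is constant; it is the asymmetric choice above—M\"obius in $f_1$, fixed $f_2$, combined with the singular-vector relation $f(A)^{*}f(A)x=N^{2}x$—that reproduces exactly the defect operator $\|f(A)\|^2 I-f_2(A)^{*}f_2(A)$. The only remaining technical point is the routine justification of differentiating the functional calculus in $s$, which follows from the boundedness of $\Phi\mapsto\Phi(A)x$ on a neighborhood of $\sigma(A)$.
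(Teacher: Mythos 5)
Your proof is correct and takes essentially the same route as the paper: both perturb the factor $f_1$ by a M\"obius map (the paper uses $\phi_a(z)=(z-a)/(1-\overline{a}z)$ composed with $f_1$; your $(f_1+s)/(1+\overline{s}f_1)$ is the same family up to the sign of the parameter), extract the vanishing first-order term with $s$ of arbitrary argument, and finish with the singular-vector relation $f(A)^{*}f(A)x=\|f(A)\|^{2}x$. The only cosmetic difference is that you linearize $\|g_s(A)x\|^{2}\le\|f(A)\|^{2}$ whereas the paper linearizes $\Re\langle \phi_a(f_1(A))f_2(A)x,f(A)x\rangle\le\|f(A)\|^{2}$, which produces the identical first-order condition.
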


\begin{remark}
In particular, taking $f_1:=f$ and $f_2:=1$, the relation \eqref{E:orthog} reduces to
\begin{equation}\label{E:orthog2}
(\|f(A)\|^2-1)\langle f(A)x,x\rangle=0.
\end{equation}
If further $\|f(A)\|>1$, then  $\langle f(A)x,x\rangle=0$, and we recapture the result mentioned earlier.
\end{remark}

\begin{proof}[Proof of Theorem~\ref{T:orthog}]
For $a\in\mathbb{D}$, let $\phi_a(z):=(z-a)/(1-\overline{a}z)$. Then $\phi_a$ maps $\mathbb{D}$ into $\mathbb{D}$ and
\[
\phi_a(z)=z-a+\overline{a}z^2+O(|a|^2)
\quad(a\to0).
\]
As $f$ is extremal for $(A,\Omega)$, we have $\|(\phi_a(f_1(A))f_2(A)\|\le\|f(A)\|$, so it follows that
\begin{align*}
\|f(A)\|^2&\ge
\Re \langle \phi_a(f_1(A))f_2(A)x,f(A)x\rangle\\
&=\Re \langle (f_1(A)-aI+\overline{a}f_1(A)^2+O(|a|^2))f_2(A)x,f(A)x\rangle\\
&=\|f(A)x\|^2-\Re\bigl(a\langle f_2(A)x,f(A)x\rangle\bigr)+ \Re\bigl(\overline{a}\langle f_1(A)^2f_2(A)x,f(A)x\rangle\bigr)+O(|a|^2)\\
&=\|f(A)x\|^2-\Re\bigl(a\langle f_2(A)^*f_2(A)x,f_1(A)x\rangle\bigr)+ \Re\bigl(\overline{a}\langle f_1(A)x,f(A)^*f(A)x\rangle\bigr)+O(|a|^2)\\
&=\|f(A)\|^2+\Re\bigl(\overline{a}\langle f_1(A)x,(f(A)^*f(A)-f_2(A)^*f_2(A))x\rangle\bigr)+O(|a|^2).
\end{align*}
Letting $a\to0$, and noting that the argument of $a$ is arbitrary, it follows that
\[
\langle f_1(A)x,(f(A)^*f(A)-f_2(A)^*f_2(A))x\rangle=0.
\]
Finally, since $f(A)$ attains its norm at $x$, we have $f(A)^*f(A)x=\|f(A)\|^2x$, which gives \eqref{E:orthog}.
\end{proof}

\begin{example} \label{ex:comp1} Let $\Theta$ be a finite Blaschke product with $\deg \Theta =n$, and set $A = S_{\Theta}$ and $\Omega = \mathbb{D}$. By Theorem \ref{thm:norm}, the extremal $f$ are exactly the finite Blaschke products $B$ with $\deg B <n,$ and by Theorem \ref{thm:vector}, the associated extremal unit vectors are exactly the unit vectors $x \in K_{\Theta}$ with $Bx \in K_{\Theta}$. Then Theorem \ref{T:orthog} and  $ \|B(S_\Theta) \| =1$ imply that for every factorization of $B = B_1 B_2$, we have
\[\left  \langle B_1(S_\Theta)x, \left(I - B_2(S_\Theta)^\ast B_2(S_\Theta) \right)x \right \rangle = 0.\]
\end{example}

There is an analogous result for $w$-extremal functions $f$ for $(A,\Omega)$.

\begin{theorem}\label{T:worthog}
Let $f$ be $w$-extremal for $(A,\Omega)$, and let $y$ be an associated $w$-extremal unit vector. Let $f=f_1f_2$ be a factorization of $f$ as a product of functions $f_1, f_2\in H_1^\infty(\Omega).$
Then
\begin{equation}\label{E:worthog}
\langle f_1(A)f(A)y,y\rangle=
\langle y,f_2(A)y\rangle.
\end{equation}
In particular, taking $f_1:=f$ and $f_2:=1$, we have
\[
\langle f(A)^2y,y\rangle=1.
\]
\end{theorem}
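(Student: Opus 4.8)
The plan is to run the same variational (perturbation) argument used in the proof of Theorem~\ref{T:orthog}, but with the operator-norm extremality replaced by numerical-radius extremality. First I would record a normalization. Since the constant function $1$ lies in $H_1^\infty(\Omega)$ and $w(I)=1$, every $w$-extremal $f$ satisfies $w(f(A))\ge w(I)=1>0$; hence $c:=\langle f(A)y,y\rangle$ satisfies $|c|=w(f(A))>0$, so in particular $c\ne0$. After replacing $f$ by a suitable unimodular multiple (and correspondingly absorbing that unimodular constant into $f_1$, which keeps $f_1\in H_1^\infty(\Omega)$ and preserves the factorization $f=f_1f_2$), I may assume $c=w(f(A))>0$ is real and positive. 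I will flag explicitly that this normalization is exactly what produces the clean form of \eqref{E:worthog}.

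Next, for $a\in\mathbb{D}$ I would introduce the competitor $g_a:=(\phi_a\circ f_1)f_2$, where $\phi_a(z)=(z-a)/(1-\overline{a}z)$. Exactly as in Theorem~\ref{T:orthog}, $\phi_a$ is holomorphic on a neighborhood of $\overline{\mathbb{D}}$ and maps $\overline{\mathbb{D}}$ into $\overline{\mathbb{D}}$, so $\phi_a\circ f_1\in H_1^\infty(\Omega)$ and hence $g_a\in H_1^\infty(\Omega)$. Using the expansion $\phi_a(z)=z-a+\overline{a}z^2+O(|a|^2)$ together with $f_1(A)f_2(A)=f(A)$ and $f_1(A)^2f_2(A)=f_1(A)f(A)$, I would expand
\[
\langle g_a(A)y,y\rangle=c-a\,\langle f_2(A)y,y\rangle+\overline{a}\,\langle f_1(A)f(A)y,y\rangle+O(|a|^2),
\]
and abbreviate $p:=\langle f_2(A)y,y\rangle$ and $q:=\langle f_1(A)f(A)y,y\rangle$, so that the conclusion to be proved is $q=\overline{p}$.

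Then I would combine this expansion with the extremality chain $|\langle g_a(A)y,y\rangle|\le w(g_a(A))\le w(f(A))=|c|$, where the first inequality is the definition of the numerical radius and the second is $w$-extremality of $f$. Squaring and retaining first-order terms gives $\Re\bigl[\overline{c}(-ap+\overline{a}q)\bigr]\le O(|a|^2)$; writing $a=\varepsilon e^{i\theta}$, dividing by $\varepsilon$, and letting $\varepsilon\to0^+$ forces $\Re\bigl[e^{i\theta}(c\overline{q}-\overline{c}p)\bigr]\le0$ for every $\theta$, and hence $c\overline{q}=\overline{c}p$. With $c>0$ real this reduces to $\overline{q}=p$, that is, $\langle f_1(A)f(A)y,y\rangle=\overline{\langle f_2(A)y,y\rangle}=\langle y,f_2(A)y\rangle$, which is \eqref{E:worthog}. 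The ``in particular'' statement then follows by taking $f_1:=f$ and $f_2:=1$, since in that case $p=\langle y,y\rangle=1$ and $q=\langle f(A)^2y,y\rangle$, so $\langle f(A)^2y,y\rangle=1$.

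I expect the main obstacle to be the phase bookkeeping. Unlike the pure-norm argument of Theorem~\ref{T:orthog}, the numerical-radius constraint controls only the modulus $|\langle g_a(A)y,y\rangle|$, so the first-order information must be extracted by passing to moduli and then letting the \emph{argument} of $a$ range over all of $[0,2\pi)$; this is what upgrades the single real inequality to the complex identity $c\overline{q}=\overline{c}p$. Moreover, that identity collapses to the stated equality $q=\overline{p}$ only after $\langle f(A)y,y\rangle$ has been normalized to be real and positive, so the care really goes into justifying that normalization together with the harmless rearrangement of the factorization it entails.
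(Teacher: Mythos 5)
Your proof is correct and is essentially the paper's own argument: the same competitor $(\phi_a\circ f_1)f_2$, the same first-order expansion of $\phi_a$, and the same use of the arbitrariness of $\arg a$, with your passage through $|\langle g_a(A)y,y\rangle|^2$ being only a cosmetic variant of the paper's real-part inequality $w(f(A))\ge\Re\langle \phi_a(f_1(A))f_2(A)y,y\rangle$. Your explicit normalization $\langle f(A)y,y\rangle=w(f(A))>0$ is a welcome clarification rather than a deviation: the paper uses it silently in the step $\Re\langle f(A)y,y\rangle=w(f(A))$, and since replacing $f$ by $if$ would flip the sign in $\langle f(A)^2y,y\rangle=1$, the identity \eqref{E:worthog} is indeed phase-sensitive and implicitly assumes exactly the convention you flag.
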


\begin{proof}
Define $\phi_a$ as before.
As $f$ is $w$-extremal for $(A,\Omega)$, we have
$w(\phi_a(f_1(A))f_2(A))\le w(f(A))$, so it follows that
\begin{align*}
w(f(A))&\ge
\Re \langle \phi_a(f_1(A))f_2(A)y,y\rangle\\
&=\Re \langle (f_1(A)-aI+\overline{a}f_1(A)^2+O(|a|^2))f_2(A)y,y\rangle\\
&=\Re\langle f(A)y,y\rangle-\Re\bigl(a\langle f_2(A)y,y\rangle\bigr)+ \Re\bigl(\overline{a}\langle f_1(A)^2f_2(A)y,y\rangle\bigr)+O(|a|^2)\\
&=w(f(A))-\Re\bigl(\overline{a}\langle y,f_2(A)y\rangle\bigr)+ \Re\bigl(\overline{a}\langle f_1(A)f(A)y,y\rangle\bigr)+O(|a|^2)\\
&=w(f(A))+\Re\Bigl(\overline{a}\bigl(\langle f_1(A)f(A)y,y\rangle-\langle y,f_2(A)y\rangle\bigr)\Bigr)+O(|a|^2).
\end{align*}
Letting $a\to0$, and noting that the argument of $a$ is arbitrary, we obtain \eqref{E:worthog}.
\end{proof}

\subsection{Representation formulas for extremal vectors}\label{sec:rep}

For what follows, recall that $\mathcal{A}(\Omega):=H^\infty(\Omega)\cap C(\overline{\Omega})$.

\begin{theorem}\label{T:rep}
Let $f$ be extremal for $(A,\Omega)$, and let $x$ be an associated extremal unit vector.
Then there exists a unique Borel probability measure $\mu$ on $\partial\Omega$ such that
\begin{equation}\label{E:rep}
\langle h(A)x,x\rangle=\int_{\partial\Omega} h\,d\mu \quad \text{ for all } h\in \mathcal{A}(\Omega).
\end{equation}
If, further, $\|f(A)\| > 1$, then
\[\int_{\partial \Omega} f d\mu = 0.\]
\end{theorem}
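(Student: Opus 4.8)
The plan is to construct the measure $\mu$ via a positive linear functional on $\mathcal{A}(\Omega)$ and invoke a Riesz-type representation theorem. First I would define the functional $L\colon \mathcal{A}(\Omega)\to\mathbb{C}$ by $L(h):=\langle h(A)x,x\rangle$. This is clearly linear and satisfies $L(1)=\langle x,x\rangle=1$ since $x$ is a unit vector. The crucial point is to show $L$ extends to a \emph{positive} functional that is represented by a probability measure on the boundary $\partial\Omega$. The natural route is to first control $\|h(A)\|$ for $h\in\mathcal{A}(\Omega)$: because $f$ is extremal, $\|f(A)\|=\sup\{\|g(A)\|:g\in H_1^\infty(\Omega)\}$, and in particular for any $h$ with $\|h\|_{\Omega}\le 1$ we have $\|h(A)\|\le\|f(A)\|$. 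I expect the key estimate to come from the same perturbation idea used in Theorem~\ref{T:orthog}: testing extremality against $\phi_a\circ h$ (or more simply against $(f+a h)/\|\cdot\|$) forces a linear constraint relating $L(h)$ to a norm. The cleanest formulation is to show that $\Re L(h)\le \sup_{\zeta\in\partial\Omega}\Re h(\zeta)$ for all $h\in\mathcal{A}(\Omega)$, which is precisely the statement that $L$ is represented by a probability measure supported on $\partial\Omega$ (by the classical duality between such dominated functionals and boundary measures, via Hahn--Banach and Riesz representation on $C(\partial\Omega)$).

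The main steps, in order, are as follows. First I would establish the pointwise domination $\Re\langle h(A)x,x\rangle\le \max_{\partial\Omega}\Re h$ for every $h\in\mathcal{A}(\Omega)$. To do this, note it suffices to prove $\Re\langle h(A)x,x\rangle\le 0$ whenever $\Re h\le 0$ on $\overline{\Omega}$ (the general case follows by adding a constant, using $L(1)=1$). When $\Re h\le 0$ on $\overline{\Omega}$, the function $e^{th}$ lies in $H_1^\infty(\Omega)$ for all $t\ge 0$; composing with $f$ via an extremality/perturbation argument, or more directly examining $\frac{d}{dt}\big|_{t=0}\|g_t(A)x\|^2$ for an appropriate family $g_t$, yields the inequality $\Re\langle h(A)x,x\rangle\le 0$. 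An alternative and perhaps cleaner derivation is to specialize Theorem~\ref{T:orthog}: the first-variation computation there, applied with general test directions rather than the single map $\phi_a$, produces exactly a one-sided bound of this dominated form. Once $\Re L(h)\le \max_{\partial\Omega}\Re h$ is in hand, I would invoke Hahn--Banach together with the Riesz representation theorem on $C(\partial\Omega)$ to produce a (unique, since $\mathcal{A}(\Omega)$ separates points and the boundary is nice) Borel probability measure $\mu$ on $\partial\Omega$ with $\langle h(A)x,x\rangle=\int_{\partial\Omega}h\,d\mu$; uniqueness follows because $\mathcal{A}(\Omega)$ (containing the conformal coordinate and its powers) is dense enough in $C(\partial\Omega)$ in the relevant weak sense, or by a standard F.~and M.~Riesz / uniqueness-of-representing-measure argument on a domain with smooth boundary.

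For the final assertion, once the measure $\mu$ exists and $\|f(A)\|>1$, I would apply the orthogonality result recorded in the remark after Theorem~\ref{T:orthog}: since $x$ is an extremal unit vector and $\|f(A)\|>1$, equation \eqref{E:orthog2} gives $\langle f(A)x,x\rangle=0$. Because $f\in H_1^\infty(\Omega)$ and is continuous up to the boundary (an extremal function, being of the form $B_A\circ\phi$, lies in $\mathcal{A}(\Omega)$), I may take $h=f$ in \eqref{E:rep}, yielding $\int_{\partial\Omega}f\,d\mu=\langle f(A)x,x\rangle=0$, as desired.

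The hard part will be the first step: rigorously establishing the one-sided domination $\Re\langle h(A)x,x\rangle\le\max_{\partial\Omega}\Re h$ from extremality, and in particular justifying that $e^{th}$ or the perturbed function stays in the unit ball and that the relevant first variation vanishes to the correct order. The rest — Riesz representation, uniqueness, and the evaluation $\int f\,d\mu=0$ — should be comparatively routine given the machinery already set up in the preceding theorems.
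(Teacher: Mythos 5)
Your proposal follows essentially the same route as the paper: perturb the extremal function by $e^{-th}$ (equivalently $e^{th}$ with your sign convention) for $h\in\mathcal{A}(\Omega)$ with $\Re h$ of one sign, take the first variation to get the positivity $\Re\langle h(A)x,x\rangle\ge 0$ when $\Re h\ge 0$, extend the resulting positive functional on $\{\Re h|_{\partial\Omega}\}$ to all of $C(\partial\Omega,\mathbb{R})$ and apply Riesz representation, with uniqueness from density of real parts of $\mathcal{A}(\Omega)$ functions, and conclude $\int_{\partial\Omega}f\,d\mu=0$ from \eqref{E:orthog2} exactly as the paper does. The only detail worth making explicit in your first step is the use of the singular-vector identity $f(A)^{*}f(A)x=\|f(A)\|^2x$ to convert $\Re\langle h(A)f(A)x,f(A)x\rangle$ into $\|f(A)\|^2\,\Re\langle h(A)x,x\rangle$, which is precisely how the paper closes the estimate.
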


\begin{proof}[Proof of Theorem~\ref{T:rep}]
Let $h\in \mathcal{A}(\Omega)$ be a function such that $\Re h\ge0$ on $\overline{\Omega}$.

Fix $t>0$. Since $f$ is  extremal for $(A,\Omega)$, we have
$\|e^{-th(A)}f(A)\|\le \|f(A)\|$. It follows that
\begin{align*}
\|f(A)\|^2&\ge
\Re \langle e^{-th(A)}f(A)x,f(A)x\rangle\\
&=\|f(A)x\|^2-\Re \langle t h(A)f(A)x,f(A)x\rangle+O(t^2)\\
&=\|f(A)\|^2-t\Re \langle  h(A)f(A)x,f(A)x\rangle+O(t^2).
\end{align*}
Cancelling off the two terms $\|f(A)\|^2$, dividing by $t$ and then letting $t\to0^+$, we obtain
\[
0\le \Re\langle h(A)f(A)x,f(A)x\rangle
=\Re\langle h(A)x,f(A)^*f(A)x\rangle
=\|f(A)\|^2\Re \langle h(A)x,x\rangle.
\]
In summary, we have shown that $\Re \langle h(A)x,x\rangle\ge0$ whenever $h\in \mathcal{A}(\Omega)$ and $\Re h\ge0$ on $\overline{\Omega}$.

We now define a linear functional $\Lambda:C(\partial \Omega,\mathbb{R})\to\mathbb{R}$ as follows. If $g$ is of the form $g=\Re h| \partial \Omega$, where $h\in \mathcal{A}(\Omega)$, then we set $\Lambda(g):=\Re\langle h(A)x,x\rangle$.
This is a well-defined, linear functional defined on a dense subspace of $C(\partial\Omega,\mathbb{R})$, and by what we have proved above it maps positive functions to positive numbers. Therefore it extends to a positive linear functional on the whole space, and is given by integration against a finite positive Borel measure $\mu$ on $\partial\Omega$. In particular
\[
\Re \langle h(A)x,x\rangle=\Lambda(\Re h)=\int_{\partial\Omega}(\Re h)\,d\mu=\Re\int_{\partial\Omega}h\,d\mu
 \quad \text{ for all } h\in \mathcal{A}(\Omega).
\]
Repeating with $h$ replaced by $ih$, we have $\Im\langle h(A)x,x\rangle=\Im\int_{\partial\Omega} h\,d\mu$, and so, by linearity, we get \eqref{E:rep}. In particular, for $h=1$,
\[
\mu(\partial\Omega)=\int_{\partial\Omega}1\,d\mu=\langle x,x\rangle=1,
\]
so $\mu$ is a probability measure. Finally, condition \eqref{E:rep} determines $\mu$ uniquely among Borel probability measures on $\partial\Omega$, since the real parts of functions in $\mathcal{A}(\Omega)$ are uniformly dense in $C(\partial\Omega,\mathbb{R})$.

For the final statement, note that though $f$ does not explicitly appear in \eqref{E:rep}, it is linked to $\mu$ by the relation
\[
(\|f(A)\|^2-1)\int_{\partial\Omega}f\,d\mu=0,
\]
as can be seen by combining \eqref{E:orthog2} and \eqref{E:rep}. Thus, if $\|f(A)\| > 1$, we must have \[\int_{\partial \Omega} f d\mu = 0.\]
\end{proof}

The following result gives some more information about the form of $\mu$.
We denote by $\phi$ a bijective conformal mapping of $\Omega$ onto $\mathbb{D}$.
Under our smoothness assumptions on $\Omega$,
the function $\phi$ extends to a diffeomorphism of $\overline{\Omega}$ onto $\overline{\mathbb{D}}$.

\begin{theorem}\label{T:rho}
With the notation of Theorem~\ref{T:rep}, we have
\begin{equation}\label{E:mu}
d\mu(\zeta)=\rho(\phi(\zeta))|\phi'(\zeta)|\,\frac{|d\zeta|}{2\pi},
\end{equation}
where  $\rho$ is
a smooth positive function on $\mathbb{T}$, and is given by
\begin{equation}\label{E:rho}
\rho(e^{i\theta})=
2\Re\Bigl\langle (I-e^{-i\theta}\phi(A))^{-1}x,x\Bigr\rangle-1 \quad(e^{i\theta}\in\mathbb{T}).
\end{equation}
\end{theorem}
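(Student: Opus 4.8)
The plan is to transport everything to the unit disk via the conformal map $\phi$ and to recognize the resulting measure as a boundary (Herglotz/Poisson) integral. Write $B:=\phi(A)$, so that $\sigma(B)=\phi(\sigma(A))\subset\mathbb{D}$ and hence the spectral radius of $B$ is strictly less than $1$. Let $\nu:=\phi_*\mu$ be the push-forward of $\mu$ to $\mathbb{T}$. For $H\in\mathcal{A}(\mathbb{D})$ the function $h:=H\circ\phi$ lies in $\mathcal{A}(\Omega)$ with $h(A)=H(B)$, so Theorem~\ref{T:rep} gives $\int_{\mathbb{T}}H\,d\nu=\langle H(B)x,x\rangle$; taking $H(\lambda)=\lambda^n$ shows that the moments are $\int_{\mathbb{T}}\lambda^n\,d\nu=\langle B^nx,x\rangle$ for $n\ge0$. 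Since $\phi$ extends to a diffeomorphism of $\overline{\Omega}$ onto $\overline{\mathbb{D}}$, the change of variables $\lambda=\phi(\zeta)$ turns normalized arclength $|d\lambda|/2\pi$ on $\mathbb{T}$ into $|\phi'(\zeta)|\,|d\zeta|/2\pi$ on $\partial\Omega$; thus it suffices to prove that $d\nu(\lambda)=\rho(\lambda)\,|d\lambda|/2\pi$ with $\rho$ as in \eqref{E:rho}, and then \eqref{E:mu} follows by pulling back.

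To find the density of $\nu$ I would compute its Herglotz transform. Using that $\nu$ is a real measure, so $\int_{\mathbb{T}}\overline{\lambda}^n\,d\nu=\overline{\langle B^nx,x\rangle}=\langle (B^*)^nx,x\rangle$, one gets
\[
G(w):=\int_{\mathbb{T}}\frac{\lambda+w}{\lambda-w}\,d\nu(\lambda)
=1+2\sum_{n\ge1}w^n\int_{\mathbb{T}}\overline{\lambda}^n\,d\nu
=2\langle (I-wB^*)^{-1}x,x\rangle-1 \qquad (w\in\mathbb{D}).
\]
Because the spectral radius of $B^*$ is less than $1$, the right-hand side is holomorphic on a disk of radius strictly greater than $1$, so $G$ extends holomorphically across $\mathbb{T}$. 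Since $\Re G$ is the Poisson integral of $\nu$, this analytic continuation forces $\nu$ to be absolutely continuous with a real-analytic (hence smooth) density equal to the boundary values $\Re G|_{\mathbb{T}}$. Evaluating at $w=e^{i\theta}$ and using $\Re\langle Sx,x\rangle=\Re\langle S^*x,x\rangle$ (so that $\Re\langle (I-e^{i\theta}B^*)^{-1}x,x\rangle=\Re\langle (I-e^{-i\theta}B)^{-1}x,x\rangle$) yields exactly $\rho(e^{i\theta})=2\Re\langle (I-e^{-i\theta}B)^{-1}x,x\rangle-1$, which is \eqref{E:rho}. This settles the form of $\mu$ and the smoothness of $\rho$.

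It remains to prove that $\rho$ is strictly positive, and this I expect to be the crux. Nonnegativity is free, since $\rho$ is the density of the positive measure $\nu$. To approach strictness I would rewrite $\rho$ purely in terms of the resolvent: setting $v:=(I-e^{-i\theta}B)^{-1}x$, $S:=(I-e^{i\theta}B^*)^{-1}$ and $T:=e^{i\theta}B^*$ (so that $S^*x=v$ and $TT^*=B^*B$), the algebraic identity $(I-T)\bigl(S+S^*-I\bigr)(I-T^*)=I-TT^*$ gives
\[
\rho(e^{i\theta})=\bigl\langle (I-B^*B)v,v\bigr\rangle=\|v\|^2-\|Bv\|^2 ,
\]
with $v\neq0$. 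If $B$ were a strict contraction this would finish the proof immediately, but herein lies the difficulty: $B=\phi(A)$ need \emph{not} be a contraction (for the matrix $C$ of Theorem~\ref{T:Crabb} one has $\|\phi(C)\|=\|C\|=\sqrt{2}>1$), so $I-B^*B$ is in general indefinite. The positivity must therefore exploit that $x$ is an \emph{extremal vector}, i.e. a top right singular vector of $f(A)=B_A(B)$ satisfying $f(A)^*f(A)x=\|f(A)\|^2x$ — precisely the input that made $\mu$ positive in Theorem~\ref{T:rep}. Concretely I would aim to show $\|Bv\|<\|v\|$ for every $\theta$ by combining $x=v-e^{-i\theta}Bv$ with the singular-vector equation. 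Ruling out the degenerate case $\|Bv\|=\|v\|$ (equivalently, a boundary zero of the nonnegative real-analytic $\rho$) is the main obstacle, since soft arguments such as the maximum principle for the positive harmonic function $\Re G$ do not by themselves forbid such zeros.
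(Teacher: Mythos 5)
Your argument is correct and, for everything the theorem actually asserts, it is essentially the paper's proof in different packaging: the paper expands $(I-e^{-i\theta}\phi(A))^{-1}$ in a power series (valid because the spectral radius of $\phi(A)$ is less than $1$) and matches the Fourier coefficients of $\rho\,d\theta/2\pi$ against the moments $\langle\phi(A)^n x,x\rangle$ supplied by \eqref{E:rep}, concluding $\rho\,d\theta/2\pi=\mu\phi^{-1}$ since two real measures on $\mathbb{T}$ agreeing on $\{e^{in\theta}:n\ge0\}$ agree; your Herglotz-transform computation is the same moment-matching, and your analytic-continuation step even delivers real-analyticity of $\rho$, slightly more than the stated smoothness. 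The pullback under the boundary diffeomorphism $\phi$ is handled identically in both arguments.

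The one place you diverge is your final paragraph, and the ``remaining gap'' you flag there is not a gap in the theorem. The paper's entire proof of positivity is the observation ``as $\mu$ is positive, so is $\rho$,'' i.e.\ \emph{positive} here means nonnegative, exactly the part you correctly call free (compare the remark following the theorem, which says that for non-extremal $x$ the same $\rho$ ``will not be positive in general,'' meaning it can go negative). Strict positivity is in fact \emph{false} in general, so the obstacle you identified is insurmountable for good reason: take $A=S_\Theta$ with $\deg\Theta=3$ and distinct zeros, $\Omega=\mathbb{D}$, $\phi=\mathrm{id}$, extremal function a single Blaschke factor $B$ with zero $a\in\mathbb{D}$, and, via Theorem~\ref{thm:vector}\eqref{item:3} with $p(z)=c(z-1)$, the extremal unit vector $x(z)=c(z-1)(1-\bar a z)/\prod_{i=1}^3(1-\bar z_i z)$. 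By Example~\ref{ex:comp2} and the uniqueness of $\mu$ in Theorem~\ref{T:rep}, $\rho=|x|^2$ on $\mathbb{T}$, which vanishes at $\zeta=1$. Your identity $\rho(e^{i\theta})=\|v\|^2-\|Bv\|^2$ with $v=(I-e^{-i\theta}\phi(A))^{-1}x$ is correct (and a nice addition, as is your observation that $\phi(A)$ need not be a contraction); it is perfectly consistent with such boundary zeros, which occur precisely where $\phi(A)$ preserves the norm of $v$, and extremality of $x$ does not exclude this.
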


\begin{proof}
By the spectral mapping theorem, the spectrum of $\phi(A)$ is contained inside $\mathbb{D}$, and in particular
its spectral radius is strictly less than $1$. Therefore we can expand $(1-e^{-i\theta}\phi(A))^{-1}$ as a power series
to get
\[
\langle (I-e^{-i\theta}\phi(A))^{-1}x,x\rangle=\sum_{k\ge0}e^{-ik\theta}\langle\phi(A)^kx,x\rangle.
\]
Taking real parts of both sides, we obtain
\[
2\Re \langle (I-e^{-i\theta}\phi(A))^{-1}x,x\rangle=\sum_{k\ge0}e^{-ik\theta}\langle\phi(A)^kx,x\rangle+\sum_{k\ge0}e^{ik\theta}\overline{\langle \phi(A)^kx,x\rangle}.
\]
Hence, defining $\rho$ as in \eqref{E:rho}, we have
\begin{equation}\label{E:Kelly}
\int_\mathbb{T} e^{in\theta} \rho(e^{i\theta})\,\frac{d\theta}{2\pi}
=\langle\phi(A)^nx,x\rangle
\quad(n\ge0).
\end{equation}
But also, by \eqref{E:rep}, we have
\[
\langle\phi(A)^nx,x\rangle =\int_{\partial\Omega}\zeta^n\,d\mu(\zeta)=\int_\mathbb{T} e^{in\theta}\,d(\mu\phi^{-1})(e^{i\theta})
\quad(n\ge0).
\]
Thus $\rho\,d\theta/2\pi$ and $\mu \phi^{-1}$ are both real measures on $\mathbb{T}$ that agree on functions of the form $e^{in\theta}$ for $n\ge0$. This implies that  $\rho\,d\theta/2\pi=\mu \phi^{-1}$, giving \eqref{E:mu}.
In particular, as $\mu$ is positive, so is $\rho$.
\end{proof}

\begin{remark}
The formula \eqref{E:Kelly}  is in fact valid for every $x\in\mathbb{C}^n$. However, $\rho$ will not be positive in general.
\end{remark}

\begin{example}\label{ex:comp2}  Let $\Theta$ be a finite Blaschke product with $\deg \Theta =n$.  By Theorems \ref{thm:norm} and \ref{thm:vector}, each Blaschke product with $\deg B <n$ is extremal for $(S_{\Theta}, \mathbb{D})$ with extremal vectors $x$ characterized by $x, Bx \in K_{\Theta}$ with $x$ nonzero. Recall that if $f \in \mathcal{A}(\mathbb{D})$, then $f(S_{\Theta}) = P_{\Theta} T_f |_{K_{\Theta}}$. Thus, we can trivially represent the inner product against the extremal vectors $x \in K_{\Theta}$ via integration, as follows:
\[ \left \langle f (S_{\Theta}) x,x \right \rangle_{K_{\Theta}}  = \left \langle  P_{\Theta} (fx), x\right \rangle_{K_{\Theta}} = \left \langle fx, x \right \rangle_{H^2} =  \frac{1}{2\pi} \int_{\mathbb{T}} f(z) |x(z)|^2 |dz|.\]
Thus, in this case, the measure from Theorem \ref{T:rho} associated to an extremal $x$ is exactly $d\mu=|x(z)|^2 |dz|$.
\end{example}

Just as in the previous section, there is a version of Theorem~\ref{T:rep}  for $w$-extremal functions.

\begin{theorem}\label{T:wrep}
Let $f$ be $w$-extremal for $(A,\Omega)$, and let $y$ be an associated $w$-extremal unit vector.
Then there exists a unique finite positive measure $\nu$ on $\partial\Omega$ such that
\begin{equation}\label{E:wrep}
\langle h(A)f(A)y,y\rangle=\int_{\partial\Omega} h\,d\nu  \quad \text{ for all } h\in \mathcal{A}(\Omega).
\end{equation}
Moreover, we have
\[\nu(\partial \Omega) = w(f(A))~\mbox{and}~\int_{\partial \Omega} f d\nu = 1.\]
\end{theorem}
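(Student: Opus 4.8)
The plan is to mirror the proof of Theorem~\ref{T:rep}, replacing the operator norm by the numerical radius and the extremal vector $x$ by the $w$-extremal vector $y$, with two new ingredients: a preliminary normalization of the phase of $f$, and an appeal to Theorem~\ref{T:worthog} for the final identity. Since multiplying $f$ by a unimodular constant changes neither $w(f(A))$ nor membership in $H_1^\infty(\Omega)$, and leaves $y$ a $w$-extremal vector, I would first assume without loss of generality that $\langle f(A)y,y\rangle=w(f(A))$ is a nonnegative real number. This normalization is precisely what makes the first-order cancellation below yield information.

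Next I would establish the key positivity statement. Fix $h\in\mathcal{A}(\Omega)$ with $\Re h\ge0$ on $\overline{\Omega}$. Then $|e^{-th}|=e^{-t\Re h}\le1$ for every $t>0$, so $e^{-th}\in H_1^\infty(\Omega)$ and hence $e^{-th}f\in H_1^\infty(\Omega)$. The $w$-extremality of $f$ gives $w(e^{-th(A)}f(A))\le w(f(A))$, and since $w(M)\ge\Re\langle My,y\rangle$ for every unit vector $y$, the expansion $e^{-th(A)}=I-th(A)+O(t^2)$ together with the normalization $\Re\langle f(A)y,y\rangle=w(f(A))$ yields
\[
w(f(A))\ge\Re\langle e^{-th(A)}f(A)y,y\rangle=w(f(A))-t\,\Re\langle h(A)f(A)y,y\rangle+O(t^2).
\]
Cancelling, dividing by $t$, and letting $t\to0^+$ produces $\Re\langle h(A)f(A)y,y\rangle\ge0$ whenever $\Re h\ge0$ on $\overline{\Omega}$.

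From here the argument follows Theorem~\ref{T:rep} verbatim in structure. I would define $\Lambda\bigl(\Re h|_{\partial\Omega}\bigr):=\Re\langle h(A)f(A)y,y\rangle$; this is well-defined (if $\Re h\equiv0$ on $\partial\Omega$ then $h$ is an imaginary constant, and the normalization makes the right-hand side vanish), linear, and positive on the subspace $\{\Re h|_{\partial\Omega}:h\in\mathcal{A}(\Omega)\}$, which is uniformly dense in $C(\partial\Omega,\mathbb{R})$. The Riesz representation theorem then gives a finite positive Borel measure $\nu$ representing $\Lambda$; taking real parts and repeating with $h$ replaced by $ih$ gives \eqref{E:wrep} by linearity, while the density of the real parts forces uniqueness of $\nu$.

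Finally I would read off the two evaluations. Setting $h\equiv1$ gives $\nu(\partial\Omega)=\langle f(A)y,y\rangle=w(f(A))$. For the last identity, note that $f=B_A\circ\phi$ with $\phi$ extending to a diffeomorphism of $\overline{\Omega}$, so $f\in\mathcal{A}(\Omega)$; taking $h=f$ in \eqref{E:wrep} then yields $\int_{\partial\Omega}f\,d\nu=\langle f(A)^2y,y\rangle=1$, the last equality being the ``in particular'' conclusion of Theorem~\ref{T:worthog}. I do not expect any single step to be a serious obstacle, since the scaffold transfers directly from Theorem~\ref{T:rep}; the one genuinely new point requiring care is the phase normalization of $f$, which is needed both to obtain the cancellation in the positivity step and to identify the total mass as $w(f(A))$ rather than merely $|\langle f(A)y,y\rangle|$.
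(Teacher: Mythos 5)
Your proposal is correct and follows essentially the same route as the paper: perturb by $e^{-th}$ for $h\in\mathcal{A}(\Omega)$ with $\Re h\ge 0$, extract $\Re\langle h(A)f(A)y,y\rangle\ge 0$ to first order in $t$, build $\nu$ via the Riesz representation argument from Theorem~\ref{T:rep}, and evaluate $h=1$ and $h=f$ (using Theorem~\ref{T:worthog}) for the two identities. Your one addition --- explicitly normalizing the phase so that $\langle f(A)y,y\rangle=w(f(A))\ge 0$, which is needed both for the first-order cancellation and for the well-definedness of $\Lambda$ and the total-mass identity --- is a point the paper uses implicitly but does not spell out, and making it explicit is a genuine (if small) improvement in rigor.
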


\begin{proof}
Let $h\in \mathcal{A}(\Omega)$ be a function such that $\Re h\ge0$ on $\overline{\Omega}$.
Fix $t>0$. Since $f$ is  $w$-extremal for $(A,\Omega)$, we have
$w(e^{-th(A)}f(A))\le w(f(A))$. It follows that
\begin{align*}
w(f(A))&\ge
\Re \langle e^{-th(A)}f(A)y,y\rangle\\
&=\Re\langle f(A)y,y\rangle-\Re \langle t h(A)f(A)y,y\rangle+O(t^2)\\
&=w(f(A))-t\Re \langle  h(A)f(A)y,y\rangle+O(t^2).
\end{align*}
Cancelling off the two terms $w(f(A))$, dividing by $t$ and then letting $t\to0^+$, we obtain
\[
\Re\langle h(A)f(A)y,y\rangle\ge0
\]
In summary, we have shown that $\Re \langle h(A)f(A)y,y\rangle\ge0$ whenever $h\in \mathcal{A}(\Omega)$ and $\Re h\ge0$ on $\overline{\Omega}$.
The rest of the proof follows the same route as that of Theorem~\ref{T:rep}.

Notice that, in this case, the measure $\nu$ that we obtain is not a probability measure. Instead we have
\[
\nu(\partial\Omega)=\int_{\partial\Omega}1\,d\nu=\langle f(A)y,y\rangle=w(f(A)).
\]
Moreover, combining Theorem \ref{T:worthog} with \eqref{E:wrep} yields
\[ \int_{\partial\Omega}f\,d\nu=\langle f(A)^2y,y\rangle =1,\]
which completes the proof.
\end{proof}


\section{Appendix} \label{sec:app}
This section contains the proof of Theorem \ref{thm:eigenvectors}. First we need the following lemma:

\begin{lemma} \label{lem:tech} Fix any $m \in \mathbb{N}$ and $a_1, \dots, a_m, b \in \mathbb{C}$. Then
\[ \sum_{k=1}^m\left[ ( 1- |a_k|^2) \prod_{\ell=k+1}^m (1-\bar{a}_\ell b) \prod_{j=1}^{k-1} (|a_j|^2 -\bar{a}_j b ) \right]+ \prod_{j=1}^m (|a_j|^2 - \bar{a}_j b ) = \prod _{j=1}^m (1-\bar{a}_j b).\]
\end{lemma}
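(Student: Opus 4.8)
The plan is to recognize the left-hand sum as a telescoping sum. For $k=0,1,\dots,m$, define the partial products
\[
T_k := \prod_{\ell=k+1}^m (1-\bar{a}_\ell b)\prod_{j=1}^k(|a_j|^2-\bar{a}_j b),
\]
with the convention that empty products equal $1$. The first step is simply to record the two endpoint identifications: $T_0=\prod_{j=1}^m(1-\bar{a}_j b)$ is exactly the right-hand side of the claimed identity, while $T_m=\prod_{j=1}^m(|a_j|^2-\bar{a}_j b)$ is the isolated product appearing on the left.

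Next, denote by $S_k$ the $k$-th summand of the sum, namely $S_k := (1-|a_k|^2)\prod_{\ell=k+1}^m(1-\bar{a}_\ell b)\prod_{j=1}^{k-1}(|a_j|^2-\bar{a}_j b)$. The crux of the argument is to verify that $S_k = T_{k-1}-T_k$. Factoring the common quantity $\prod_{\ell=k+1}^m(1-\bar{a}_\ell b)\prod_{j=1}^{k-1}(|a_j|^2-\bar{a}_j b)$ out of both $T_{k-1}$ and $T_k$ reduces the difference to a single scalar factor, and the elementary identity
\[
(1-\bar{a}_k b)-(|a_k|^2-\bar{a}_k b)=1-|a_k|^2
\]
then gives $T_{k-1}-T_k=S_k$ precisely.

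Summing over $k=1,\dots,m$, the consecutive differences telescope, yielding
\[
\sum_{k=1}^m S_k = \sum_{k=1}^m (T_{k-1}-T_k)=T_0-T_m.
\]
Rearranging this as $\sum_{k=1}^m S_k + T_m = T_0$ is exactly the assertion of the lemma.

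The only real care required is bookkeeping with the index ranges and the empty-product conventions at the endpoints $k=1$ and $k=m$; there is no analytic content, and in particular the complex (rather than real) nature of $b$ and the $a_j$ plays no role, since the whole computation is a polynomial identity in these variables. As an alternative, one could prove the identity by induction on $m$: split off the $k=m$ summand together with the product $T_m$, combine them via the same scalar identity to extract a factor $(1-\bar{a}_m b)$, factor that same quantity out of the remaining summands, and apply the inductive hypothesis to the resulting length-$(m-1)$ sum. The telescoping formulation, however, is cleaner and avoids the overhead of setting up the induction.
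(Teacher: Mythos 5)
Your proof is correct. The paper proves the lemma by induction on $m$: the base case $m=1$ is the scalar identity $(1-|a_1|^2)+(|a_1|^2-\bar{a}_1b)=1-\bar{a}_1b$, and the inductive step splits off the $(m+1)$-st summand, absorbs it into the isolated product to extract a factor $(1-\bar{a}_{m+1}b)$, and invokes the hypothesis on the remaining length-$m$ expression --- precisely the ``alternative'' you sketch in your final paragraph. Your telescoping formulation is a genuinely different organization of the same algebra: by introducing the partial products $T_k$ and verifying the local identity $S_k=T_{k-1}-T_k$ (which rests on the same scalar cancellation $(1-\bar{a}_kb)-(|a_k|^2-\bar{a}_kb)=1-|a_k|^2$ that drives the paper's base case), you collapse the whole sum in one step, with no induction scaffolding. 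What this buys is transparency: the identity is revealed as a sum of consecutive differences, the endpoint identifications $T_0$ and $T_m$ make it immediate why the right-hand side and the isolated product appear, and the argument is arguably easier to check than tracking the regrouping in the paper's inductive step. The paper's induction, conversely, matches the form in which the lemma is actually consumed in the proof of Theorem~\ref{thm:eigenvectors} (peeling off one factor at a time), but neither approach has any advantage in generality --- as you correctly note, both establish a purely formal polynomial identity in which the complex conjugates and moduli play no special role.
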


\begin{proof} We prove this via induction on $m$. If $m=1$, then the left-hand-side is
\[ (1-|a_1|^2) + (|a_1|^2 -\bar{a}_1 b) = 1-\bar{a}_1b, \]
as needed. Now assuming the result holds for $m$, we will show it holds for $m+1$. Fix $a_1, \dots, a_{m+1}, b \in \mathbb{C}$. Starting with the left-hand-side of the equation and pulling the $m+1$ term out of the sum, we have
\[
\begin{aligned}
&\sum_{k=1}^{m+1} \left[ ( 1- |a_k|^2) \prod_{\ell=k+1}^{m+1} (1-\bar{a}_\ell b) \prod_{j=1}^{k-1} (|a_j|^2 -\bar{a}_j b ) \right]+ \prod_{j=1}^{m+1} (|a_j|^2 - \bar{a}_j b ) \\
&=  \sum_{k=1}^m\left[ ( 1- |a_k|^2) \prod_{\ell=k+1}^{m+1} (1-\bar{a}_\ell b) \prod_{j=1}^{k-1} (|a_j|^2 -\bar{a}_j b ) \right]+ \left[\prod_{j=1}^{m} (|a_j|^2 - \bar{a}_j b ) \right] (1-\bar{a}_{m+1} b) \\
& = (1-\bar{a}_{m+1} b) \left[ \sum_{k=1}^m\left[ ( 1- |a_k|^2) \prod_{\ell=k+1}^m (1-\bar{a}_\ell b) \prod_{j=1}^{k-1} (|a_j|^2 -\bar{a}_j b ) \right]+ \prod_{j=1}^m (|a_j|^2 - \bar{a}_j b ) \right] \\
& = \prod _{j=1}^{m+1} (1-\bar{a}_j b),
\end{aligned}
\]
by the induction hypothesis.\end{proof}

Now we can proceed to the proof of Theorem \ref{thm:eigenvectors}.

\begin{proof} It suffices to show that the columns of $X$ are associated eigenvectors of $M_{\Theta}$, namely $M_{\Theta} \text{Col}_j X = z_j \text{Col}_j{X}$. For simplicity of notation, write $M:= M_{\Theta}$. Then we need to show that
\[  \sum_{k=1}^n M_{ik}X_{kj} = z_j X_{ij}\]
for $1 \le i,j\le n.$ First assume, $i >j$. If $k<i$, then $M_{ik} =0$. If $k \ge i$, then $k >j$ so $X_{kj}=0$. This immediately implies that
\[  \sum_{k=1}^n M_{ik}X_{kj} =  0 = z_j X_{ij},\]
as needed. Similarly, if $i=j$, then $M_{ik}=0$ if $i >k$ and $X_{ki}=0$ if $k>i$. Thus
\[  \sum_{k=1}^n M_{ik}X_{ki}  = M_{ii}X_{ii} = z_i X_{ii}.\]
Now assume $i<j$. If $k<i$, then $M_{ik}=0$. Similarly, if $k >j$, then $X_{kj}=0$. Therefore,
\[  \sum_{k=1}^n M_{ik}X_{kj} = \sum_{k=i}^j M_{ik}X_{kj}  = M_{ii} X_{ij} + \sum_{k=i+1}^{j-1} M_{ik}X_{kj} + M_{ij} X_{jj}.\]
 Using the formulas for $M$ and $X$ gives
 \[
 \begin{aligned}
 \sum_{k=1}^n M_{ik}X_{kj} &= z_i  \frac{ \sqrt{1-|z_i|^2} \sqrt{1-|z_j|^2}}{z_j - z_i}\prod_{\ell=i+1}^{j-1} \left( \frac{1-\bar{z}_\ell z_j}{z_j - z_\ell}\right) \\
&+ \sum_{k=i+1}^{j-1} \sqrt{1-|z_i|^2} \sqrt{1-|z_k|^2} \prod_{\ell=i+1}^{k-1} (-\bar{\lam}_\ell)  \frac{ \sqrt{1-|z_k|^2} \sqrt{1-|z_j|^2}}{z_j - z_k}\prod_{r=k+1}^{j-1} \left( \frac{1-\bar{z}_r z_j}{z_j - z_r}\right)  \\
&+ \sqrt{1-|z_i|^2} \sqrt{1-|z_j|^2} \prod_{\ell=i+1}^{j-1} (-\bar{\lam}_\ell).
\end{aligned}
\]
Factoring out common terms, getting a common denominator, and multiplying the $\bar{z}_{\ell}$ terms through gives
\[
\begin{aligned}
&\frac{ \sqrt{1-|z_i|^2} \sqrt{1-|z_j|^2}}{\prod_{\ell=i}^{j-1} (\lam_j- \lam_\ell)} \Bigg( z_i \prod_{\ell=i+1}^{j-1}(1-\bar{\lam}_{\ell} \lam_j) \\
&+(z_j-z_i) \Bigg(\sum_{k=i+1}^{j-1} \left[(1-|\lam_k|^2) \prod_{\ell=i+1}^{k-1} (|\lam_\ell|^2-\bar{\lam}_\ell \lam_j) \prod_{r=k+1}^{j-1} (1-\bar{\lam}_r \lam_j) \right]+ \prod_{\ell=i+1}^{j-1} (|\lam_\ell|^2-\bar{\lam}_\ell \lam_j) \Bigg) \Bigg).
\end{aligned}
\]
Applying Lemma \ref{lem:tech} with $m = j-1-i$ and $a_1 = \lam_{i+1}, a_2 = \lam_{i+2}, \dots, a_m=\lam_{j-1}$ and $b = \lam_j$ gives
\[\frac{ \sqrt{1-|z_i|^2} \sqrt{1-|z_j|^2}}{\prod_{\ell=i}^{j-1} (\lam_j- \lam_\ell)} \Bigg( z_i \prod_{\ell=i+1}^{j-1}(1-\bar{\lam}_{\ell} \lam_j)  +  (z_j-z_i) \prod_{\ell=i+1}^{j-1}(1-\bar{\lam}_{\ell} \lam_j)\Bigg) = \lam_j X_{ij},\]
as needed.
\end{proof}
A similar argument shows that if we let $T$ denote the second matrix defined in Theorem \ref{thm:eigenvectors}, then $TX = I$, the $n\times n$ identity matrix. This implies that $T=X^{-1}$ and so establishes the formula for $X^{-1}$. We omit the details of this computation here.

\section*{Acknowledgement} The authors of this note are grateful to the American Institute of Mathematics (AIM) and to the organizers of the Workshop on Crouzeix's conjecture for bringing us together. We are also indebted to  Banff International Research Station (BIRS) for providing us with the opportunity to work as a team at BIRS as part of a focused research group. Lastly, we thank \L ukasz Kosi\'nski for helpful discussions.

\end{document}